\numberwithin{equation}{section}
\theoremstyle{plain}
\newtheorem{thm}{Theorem}[section]
\newtheorem{prop}[thm]{Proposition}
\newtheorem{lemma}[thm]{Lemma}
\theoremstyle{definition}
\newtheorem{defi}[thm]{Definition}
\newcommand{\be}{\begin{equation}}
\newcommand{\ee}{\end{equation}}
\def\R{{\mathbb R}}
\def\D{{\mathbb D}}
\def\N{{\mathbb N}}
\def\C{{\mathbb C}}
\def\S3{{{\mathbb S}^3}}
\def\SU2{{{\rm SU}(2)}}
\def\Rn{{{\mathbb R}^n}}
\def\Gh{{\widehat{G}}}
\def\HS{{\mathtt{HS}}}
\def\p#1{{\left({#1}\right)}}
\def\br#1{{\left[{#1}\right]}}
\def\jp#1{{\left\langle{#1}\right\rangle}}
\def\wt#1{{\widetilde{#1}}}
\def\Hcal{{\mathcal H}}
\def\Hcal{{\mathcal H}}
\def\va{\varphi}
\DeclareMathOperator{\Tr}{Tr}
\def\C{{\mathbb C}}
\def\Rn{{\mathbb R}^n}
\def\R2n{{\mathbb R}^{2n}}
\def\S{{\mathcal S}}
\def\D{{\mathcal D}}
\def\B{{\mathcal B}}
\def\Rn{{\mathbb R}^n}
\def\C{{\mathbb C}}
\def\R2{{\mathbb R}^2}
\def\R2n{{\mathbb R}^{2n}}
\def\S{{\mathcal S}}
\def\D{{\mathcal D}}
\def\B{{\mathcal B}}
\def\L{{\mathcal L}}
\def\Ghs{{\Gh_{*}}}
\def\sumxi{\sum_{[\xi]\in\Gh}}
\def\sumij{\sum_{i,j=1}^{d_\xi}}
\def\paal{{\partial^{\alpha}}}
\begin{document}

\title[Gevrey functions and ultradistributions on compact Lie groups]
{Gevrey functions and ultradistributions on compact Lie groups and
homogeneous spaces}

\author[Aparajita Dasgupta]{Aparajita Dasgupta}
\address{
  Aparajita Dasgupta:
  \endgraf
  Department of Mathematics
  \endgraf
  Imperial College London
  \endgraf
  180 Queen's Gate, London SW7 2AZ 
  \endgraf
  United Kingdom
  \endgraf
  {\it E-mail address} {\rm a.dasgupta@imperial.ac.uk}
  }

\author[Michael Ruzhansky]{Michael Ruzhansky}
\address{
  Michael Ruzhansky:
  \endgraf
  Department of Mathematics
  \endgraf
  Imperial College London
  \endgraf
  180 Queen's Gate, London SW7 2AZ 
  \endgraf
  United Kingdom
  \endgraf
  {\it E-mail address} {\rm m.ruzhansky@imperial.ac.uk}
  }

\thanks{The first author was supported by the Grace-Chisholm Young Fellowship from London Mathematical Society. The second
 author was supported by the EPSRC
 Leadership Fellowship EP/G007233/1.}
\date{\today}

\subjclass{Primary 46F05; Secondary 22E30}
\keywords{Gevrey spaces, ultradistributions, Fourier transform, compact Lie groups, homogenous spaces}

\begin{abstract}
In this paper we give global characterisations of Gevrey-Roumieu and Gevrey-Beurling
spaces of ultradifferentiable functions on compact Lie groups in terms of the
representation theory of the group and the spectrum of the Laplace-Beltrami
operator. Furthermore, we characterise their duals, the spaces of corresponding
ultradistributions. For the latter, the proof is based on first obtaining
the characterisation of their $\alpha$-duals in the sense of K\"othe and
the theory of sequence spaces. We also give the corresponding characterisations on
compact homogeneous spaces.
\end{abstract}

\maketitle

\section{Introduction}

The spaces of Gevrey ultradifferentiable functions are well-known on $\Rn$ and their
characterisations exists on both the space-side and the Fourier transform side,
leading to numerous applications in different areas. The aim
of this paper is to obtain global characterisations of the spaces of Gevrey ultradifferentiable
functions and of the spaces of ultradistributions using the eigenvalues
of the Laplace-Beltrami operator $\L_{G}$ (Casimir element) on the compact Lie group $G$.
We treat both the cases of Gevrey-Roumieu and Gevrey-Beurling functions, and 
the corresponding spaces of ultradistributions, which are their topological duals 
with respect to their inductive and projective limit topologies, respectively.

If $M$ is a compact homogeneous space, let $G$ be its motion group and $H$ a 
stationary subgroup at some point, so that $M\simeq G/H.$
Our results on the motion group $G$ will yield the corresponding characterisations
for Gevrey functions and ultradistributions on the homogeneous space $M$.
Typical examples are the real spheres ${\mathbb S}^{n}={\rm SO}(n+1)/{\rm SO}(n)$,
complex spheres
(complex projective spaces) $\mathbb C\mathbb P^{n}={\rm SU}(n+1)/{\rm SU}(n)$,
or quaternionic projective spaces 
$\mathbb H\mathbb P^{n}$. 

Working in local coordinates and treating $G$ as a manifold
the Gevrey(-Roumieu) class $\gamma_{s}(G)$, $s\geq 1$, is the space of 
functions $\phi\in{C^{\infty}(G)}$ such that in every local coordinate
chart its local representative, say $\psi\in C^{\infty}(\Rn)$, is such that 
there exist constants $A>0$ and $C>0$ 
such that for all multi-indices $\alpha,$ we have that
$$
{|\partial^{\alpha}\psi(x)|\leq C A^{|\alpha|}\left(\alpha !\right)^{s}}
$$
holds for all $x\in\Rn$. By the chain rule one readily sees that this class is
invariantly defined on (the analytic manifold) $G$ for $s\geq 1.$
For $s=1$ we obtain the class of analytic functions.
This behaviour can be characterised on the 
Fourier side by being equivalent to the condition that there exist $B>0$ 
and $K>0$ such that
$$|\widehat{\psi}(\eta)|\leq K e^{-B\jp{\eta}^{1/s}}$$ 
holds for all $\eta\in\Rn.$
We refer to \cite{Kom} for the extensive analysis of these spaces and their
duals in $\Rn$.
However, such a local point of view does not tell us about the global
properties of $\phi$ such as its relation to the geometric or spectral 
properties of the group $G$, and this is the aim of this paper.
The characterisations that we give are global, 
i.e. they do not refer to the localisation of the spaces, but are expressed
in terms of the behaviour of the global Fourier transform and the
properties of the global Fourier coefficients.

Such global characterisations will be useful for applications. For example,
the Cauchy problem for the wave equation
\begin{equation}\label{WE}
\partial_{t}^{2}u-a(t)\L_{G}u=0
\end{equation}
is well-posed, in general, only in Gevrey spaces, if $a(t)$ becomes zero at some
points. However, in local coordinates \eqref{WE} becomes a second order equation
with space-dependent coefficients and lower order terms, the case when the
well-posedness results are, in general\footnote{The result of Bronshtein \cite{B}
holds but is, in general, not optimal for some types of equations or does not hold
for low regularity $a(t).$}, not available even on $\Rn$. 
At the same time,
in terms of the group Fourier transform the equation \eqref{WE} is basically
constant coefficients, and the global characterisation of Gevrey spaces together
with an energy inequality for \eqref{WE} yield the well-posedness result.
We will address this and other applications elsewhere, but we note that
in these problems both types of Gevrey spaces
appear naturally, see e.g. \cite{GR} for the Gevrey-Roumieu ultradifferentiable
and Gevrey-Beurling ultradistributional well-posedness of weakly
hyperbolic partial differential equations in the Euclidean space.

In Section \ref{SEC:Res} we will fix the notation and formulate our results.
We will also recall known (easy) characterisations for other spaces, such as
spaces of smooth functions, distributions, or Sobolev spaces over $L^{2}.$
The proof for the characterisation of Gevrey spaces will rely on the 
harmonic analysis on the group, the family of spaces $\ell^{p}(\Gh)$ on the
unitary dual introduced in \cite{RTb}, and to some extent on the analysis
of globally defined matrix-valued symbols of pseudo-differential operators
developed in \cite{RTb, RTi}. The analysis of ultradistributions will rely on
the theory of sequence spaces (echelon and co-echelon spaces), see e.g.
K\"othe \cite{Koe}, Ruckle \cite{WR}. 
Thus, we will first give characterisations of the so-called
$\alpha$-duals of the Gevrey spaces and then 
show that $\alpha$-duals
and topological duals coincide. We also prove that both Gevrey spaces
are perfect spaces, i.e. the $\alpha$-dual of its $\alpha$-dual is the
original space. This is done in Section \ref{SEC:alpha}, and the ultradistributions
are treated in Section \ref{SEC:ultra}.

We note that the case of the periodic
Gevrey spaces, which can be viewed as spaces on the torus
$\mathbb T^{n}$, has been characterised by the Fourier coefficients in
\cite{Ta}. However, that paper stopped short of characterising 
the topological duals (i.e. the corresponding ultradistributions), so already
in this case our characterisation in Theorem \ref{THM:duals}
appears to be new.

In the estimates throughout the paper the constants will be denoted by letter $C$
which may change value even in the same formula. If we want to emphasise the
change of the constant, we may use letters like $C', A_{1}$, etc.

\section{Results}
\label{SEC:Res}

We first fix the notation and recall known characterisations of several spaces.
We refer to \cite{RTb} for details on the following constructions.

Let $G$ be a compact Lie group of dimension $n$.
Let $\Gh$ denote the set of (equivalence classes of) continuous irreducible
unitary representations of $G$. 
Since $G$ is compact, $\Gh$ is discrete.
For $[\xi]\in\Gh$, by choosing a basis in the
representation space of $\xi$, we can view $\xi$ as a matrix-valued function
$\xi:G\to\C^{d_{\xi}\times d_{\xi}},$ where $d_{\xi}$ is the dimension of the
representation space of $\xi.$
For $f\in L^{1}(G)$ we define its global Fourier transform at $\xi$ by
$$
 \widehat{f}(\xi)=\int_{G} f(x) \xi(x)^{*} dx,
$$
where $dx$ is the normalised Haar measure on $G$. The Peter-Weyl theorem
implies the Fourier inversion formula
\begin{equation}\label{EQ:FS}
 f(x)=\sumxi d_{\xi} \Tr\p{\xi(x)\widehat{f}(\xi)}.
\end{equation}
For each $[\xi]\in\Gh$, the matrix elements of $\xi$ are the eigenfunctions for 
the Laplace-Beltrami operator $\L_{G}$
with the same eigenvalue which we denote by $-\lambda_{[\xi]}^{2}$, so that
$-\L_{G}\xi_{ij}(x)=\lambda_{[\xi]}^{2}\xi_{ij}(x),$ for all $1\leq i,j\leq d_{\xi}.$

Different spaces on the Lie group $G$ can be characterised in terms of comparing
the Fourier coefficients of functions with powers of the eigenvalues of the Laplace-Beltrami
operator. We denote $\jp{\xi}=(1+\lambda_{[\xi]}^{2})^{1/2}$, the eigenvalues of
the elliptic first-order pseudo-differential operator $(I-\L_{G})^{1/2}.$ 

Then, it is easy to see that
$f\in C^{\infty}(G)$ if and only if for every $M>0$ there exists $C>0$ such that
$\|\widehat{f}(\xi)\|_{\HS}\leq C\jp{\xi}^{-M},$ and
$u\in \D'(G)$ if and only if there exist $M>0$ and $C>0$ such that
$\|\widehat{u}(\xi)\|_{\HS}\leq C\jp{\xi}^{M},$ where we define
$\widehat{u}(\xi)_{ij}=u(\overline{\xi_{ji}})$, $1\leq i,j\leq d_{\xi}.$
For this and other occasions, we can write this as
$\widehat{u}(\xi)=u(\xi^{*})$ in the matrix notation. 
The appearance of the
Hilbert-Schmidt norm is natural in view of the Plancherel identity
$$
 (f,g)_{L^{2}(G)}=\sumxi d_{\xi} \Tr\p{\widehat{f}(\xi)\widehat{g}(\xi)^{*}},
$$
so that 
$$
 \|f\|_{L^{2}(G)}=\p{\sumxi d_{\xi} \|\widehat{f}(\xi)\|_{\HS}^{2}}^{1/2}=:
  \|\widehat{f}\|_{\ell^{2}(\Gh)}
$$
can be taken as the definition of the space $\ell^{2}(\Gh)$. Here, of course,
$ \|A\|_{\HS}=\sqrt{\Tr(A A^{*})}.$
It is convenient to use the sequence space
$$
\Sigma=\{\sigma=(\sigma(\xi))_{[\xi]\in\Gh}: \sigma(\xi)\in\C^{d_{\xi}\times d_{\xi}} \}.
$$
In \cite{RTb}, the authors introduced a family of spaces $\ell^{p}(\Gh)$,
$1\leq p<\infty$, by
saying that $\sigma\in\Sigma$ belongs to $\ell^{p}(\Gh)$ if the norm
$$
\|\sigma\|_{\ell^{p}(\Gh)}:=\p{\sumxi d_{\xi}^{p\p{\frac2p-\frac12}} \|\sigma(\xi)\|_{\HS}^{p}}^{1/p}
$$
if finite. There is also the space $\ell^{\infty}(\Gh)$ for which the norm
\begin{equation}\label{EQ:linf}
\|\sigma\|_{\ell^{\infty}(\Gh)}:=\sup_{[\xi]\in\Gh} d_{\xi}^{-\frac12} \|\sigma(\xi)\|_{\HS}
\end{equation}
is finite. These are interpolation spaces for which the Hausdorff-Young inequality holds,
in particular, we have
\begin{equation}\label{EQ:HY}
 \|\widehat{f}\|_{\ell^{\infty}(\Gh)}\leq \|f\|_{L^{1}(G)} \; \textrm{ and } \;
 \|\mathscr F^{-1}\sigma\|_{L^{\infty}(G)}\leq \|\sigma\|_{\ell^{1}(\Gh)},
\end{equation}
with $(\mathscr F^{-1}\sigma)(x)=\sumxi d_{\xi} \Tr\p{\xi(x)\sigma(\xi)}.$
We refer to \cite[Chapter 10]{RTb} for further details on these spaces.
Usual Sobolev spaces on $G$ as a manifold, defined by localisations, can be also 
characterised by the global condition
\begin{equation}\label{EQ:S1}
f\in H^{t}(G) \textrm{ if and only if } \jp{\xi}^t \widehat{f}(\xi)\in \ell^{2}(\Gh).
\end{equation}
For a multi-index $\alpha=(\alpha_{1},\ldots,\alpha_{n})$, we define
$|\alpha|=|\alpha_{1}|+\cdots+|\alpha_{n}|$ and
$\alpha!=\alpha_{1}!\cdots\alpha_{n}!.$
We will adopt the convention that $0!=1$ and $0^{0}=1.$

Let $X_{1},\ldots,X_{n}$ be a basis of the Lie algebra of $G$, normalised in some way, 
e.g. with respect to the Killing form. 
For a multi-index $\alpha=(\alpha_{1},\ldots,\alpha_{n})$, we define
the left-invariant differential operator of order $|\alpha|$,
$\partial^{\alpha}:=Y_{1}\cdots Y_{|\alpha|},$
with $Y_{j}\in\{X_{1},\cdots,X_{n}\}$, $1\leq j\leq |\alpha|$,
and $\sum_{j: Y_{j}=X_{k}} 1=\alpha_{k}$ for every $1\leq k\leq n.$
It means that $\paal$ is a composition of left-invariant
derivatives with respect to vectors $X_{1},\cdots,X_{n}$, such that each
$X_{k}$ enters $\paal$ exactly $\alpha_{k}$ times.
There is a small abuse of notation here since we do not specify in the
notation $\paal$ the order
of vectors $X_{1},\cdots,X_{n}$ entering in $\paal$, but this will not be important
for the arguments in the paper. The reason we define $\paal$ in this way is to
take care of the non-commutativity of left-invariant differential
operators corresponding to the vector fields $X_{k}.$

We will distinguish between two families of Sobolev spaces over $L^{2}$.
The first one is defined by 
$H^{t}(G)=\left\{f\in L^2 (G) : (I-\L_G)^{t/2} f\in L^{2}(G) \right\}$ with the norm
\begin{equation}\label{EQ:S2}
\|f\|_{H^{t}(G)}:=\| (I-\L_G)^{t/2} f\|_{L^{2}(G)}=\|\jp{\xi}^{t}\widehat{f}(\xi)\|_{\ell^{2}(\Gh)}.
\end{equation}
The second one is defined for $k\in \N_{0}\equiv \N\cup\{0\}$ by
$$W^{k,2}=\left\{f\in L^2(G): 
\|f\|_{W^{k,2}}:=\sum_{|\alpha|\leq k}||\partial^{\alpha}f||_{L^{2}(G)}<\infty\right\}.$$
Obviously, $H^{k}\simeq W^{k,2}$ for any $k\in\N_{0}$ but for us the relation between
norms will be of importance, especially as $k$ will tend to infinity. \\

Let $0<s<\infty.$ We first fix the notation for the Gevrey 
spaces and then formulate the results. In the definitions below we allow any $s>0$, and the
characterisation of $\alpha$-duals in the sequel will still hold. However, when dealing with
ultradistributions we will be restricting to $s\geq 1$.

\begin{defi} \label{DEF:GR}
Gevrey-Roumieu(R) class $\gamma_{s}(G)$ is the space of 
functions $\phi\in{C^{\infty}(G)}$ for which there exist constants $A>0$ and $C>0$ 
such that for all multi-indices $\alpha,$ we have
\begin{equation}\label{EQ:GR}
||\partial^{\alpha}\phi||_{L^\infty}\equiv\sup_{x\in G}{|\partial^{\alpha}\phi(x)|\leq 
C A^{|\alpha|}\left(\alpha !\right)^{s}}.
\end{equation}
Functions $\phi\in{\gamma_{s}}(G)$ are called ultradifferentiable functions of Gevrey-Roumieu 
class of order s.
\end{defi}
For $s=1$ we obtain the space of analytic functions, and for $s>1$ the space of Gevrey-Roumieu
functions on $G$ considered as a manifold, by saying that the function is in the Gevrey-Roumieu
class locally in every coordinate chart. The same is true for the other Gevrey space:

\begin{defi} Gevrey-Beurling(B) class $\gamma_{(s)}(G)$ is the space of functions 
$\phi\in{C^{\infty}(G)}$ such that for every $A>0$ there exists $C_A>0$ so that 
for all multi-indices $\alpha,$ we have
$$
||\partial^{\alpha}\phi||_{L^\infty}\equiv\sup_{x\in G}
{|\partial^{\alpha}f(x)|\leq C_A A^{|\alpha|}\left(\alpha !\right)^{s}}.
$$
Functions $\phi\in{\gamma_{(s)}}(G)$ are called ultradifferentiable functions 
of Gevrey-Beurling class of order s.
\end{defi}

\begin{thm}\label{THM:Gevrey}
Let $0<s<\infty$. \\
{\rm \bf (R)} We have $\phi\in \gamma_{s}(G)$ if and only if there exist $B>0$ and $K>0$
such that
\begin{equation}\label{EQ:GR}
||\widehat{\phi}(\xi)||_{\HS}\leq K e^{-B\jp{\xi}^{1/s}}
\end{equation}
holds for all $[\xi]\in \Gh.$\\
{\rm\bf (B)} We have $\phi\in \gamma_{(s)}(G)$ if and only if for every $B>0$ there exists $K_B>0$
such that 
\begin{equation}\label{EQ:GB}
||\widehat{\phi}(\xi)||_{\HS}\leq K_B e^{-B\jp{\xi}^{1/s}}
\end{equation}
holds for all $[\xi]\in \Gh.$
\end{thm}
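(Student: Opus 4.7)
The plan is to prove (R) in both directions; the Beurling part (B) will then follow by quantifier-tracking through the same estimates. The two group-theoretic inputs are: (i) the Laplace-Beltrami operator $\L_G$ acts on matrix coefficients by $-\lambda_{[\xi]}^2$, and (ii) every basis vector $X_k$ of the Lie algebra satisfies $\n{d\xi(X_k)}_{\mathrm{op}}\leq\lambda_{[\xi]}\leq\jp{\xi}$, immediate from $\sum_k d\xi(X_k)^2=-\lambda_{[\xi]}^2 I$. Together with the Weyl dimension bound $d_\xi\lesssim\jp{\xi}^n$ and the polynomial growth of the counting function $\#\{[\xi]:\jp{\xi}\leq R\}\lesssim R^N$, these supply all the required polynomial information about $\Gh$.

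\textbf{Necessity (Gevrey $\Rightarrow$ Fourier decay).} Assuming the Gevrey estimate, I would write $\L_G^k$ as a sum of at most $n^{2k}$ monomials $\paal$ with $|\alpha|=2k$, obtaining
\[
\n{\L_G^k\phi}_{L^2(G)}\leq\n{\L_G^k\phi}_{L^\infty(G)}\leq C(nA)^{2k}((2k)!)^s.
\]
Isolating a single term in Plancherel gives $d_\xi^{1/2}\lambda_{[\xi]}^{2k}\n{\widehat\phi(\xi)}_\HS\leq\n{\L_G^k\phi}_{L^2(G)}$, and for nontrivial $[\xi]$ one has $\lambda_{[\xi]}\geq c\jp{\xi}$, so
\[
\n{\widehat\phi(\xi)}_\HS\leq C\jp{\xi}^{-2k}M^{2k}((2k)!)^s\qquad\text{for every }k\in\N_0.
\]
Optimizing over $k$ with $k\sim\tfrac{1}{2}(\jp{\xi}/(eM))^{1/s}$ and invoking Stirling, I would convert the right-hand side into $Ke^{-B\jp{\xi}^{1/s}}$ for appropriate $B,K>0$; the trivial representation contributes a bounded term.

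\textbf{Sufficiency (Fourier decay $\Rightarrow$ Gevrey).} Assuming the Fourier decay, differentiating the inversion formula \eqref{EQ:FS} and noting that under left-invariant differentiation $\paal\xi(x)$ factors as $\xi(x)\,d\xi(Y_1)\cdots d\xi(Y_{|\alpha|})$, I would apply $|\Tr(AB)|\leq\n{A}_\HS\n{B}_\HS$ and $\n{A}_\HS\leq d_\xi^{1/2}\n{A}_{\mathrm{op}}$ to reach
\[
|\paal\phi(x)|\leq C^{|\alpha|}\sumxi d_\xi^{3/2}\jp{\xi}^{|\alpha|}\n{\widehat\phi(\xi)}_\HS.
\]
Splitting $e^{-B\jp{\xi}^{1/s}}=e^{-(B/2)\jp{\xi}^{1/s}}\cdot e^{-(B/2)\jp{\xi}^{1/s}}$, I would use the first factor to absorb all polynomial prefactors (yielding a convergent series over $\Gh$) and the second to bound $\sup_{t\geq 1}t^{|\alpha|}e^{-(B/2)t^{1/s}}\leq A_1^{|\alpha|}(|\alpha|!)^s$ by the same calculus/Stirling argument as above. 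The multinomial inequality $|\alpha|!\leq n^{|\alpha|}\alpha!$ completes the Gevrey bound.

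\textbf{Beurling case and principal difficulty.} For (B), both directions go through verbatim with the uniform quantifier: small $A$ in the Gevrey bound corresponds to large $B$ in the Fourier decay, and conversely. I expect no deep obstacle; the main technical point is the careful bookkeeping in the two reciprocal calculus lemmas
\[
\inf_{k\in\N_0}\jp{\xi}^{-2k}M^{2k}((2k)!)^s\leq Ke^{-B\jp{\xi}^{1/s}},\qquad\sup_{t\geq 1}t^M e^{-Bt^{1/s}}\leq C^M(M!)^s,
\]
chosen so that the group-theoretic polynomial factors ($d_\xi^{3/2}$, the counting function, and the gap between $((2k)!)^s$ and $(\alpha!)^s$) are absorbed without spoiling the quantitative relationship between $A$ and $B$.
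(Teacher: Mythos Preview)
Your proposal is correct and follows essentially the paper's strategy in the necessity direction, but takes a more direct route for sufficiency.

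For the \emph{necessity} direction you do exactly what the paper does (Proposition~\ref{PROP:l} forward direction plus Lemma~\ref{L:FT}): expand $\L_G^k$ in monomials, obtain the bound $\n{(-\L_G)^k\phi}_{L^\infty}\leq C A^{2k}((2k)!)^s$, pass to Fourier coefficients, and optimise over $k$. The only cosmetic difference is that you isolate a single Plancherel term where the paper uses the Hausdorff--Young inequality $L^1\to\ell^\infty(\Gh)$; these give the same estimate.

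For the \emph{sufficiency} direction your argument is genuinely simpler. The paper first converts the Fourier decay into the Laplacian bound $\n{(-\L_G)^k\phi}_{L^\infty}\leq C A^{2k}((2k)!)^s$ (this is the same splitting of the exponential you use), but then invokes the converse of Proposition~\ref{PROP:l} to recover the Gevrey bound on $\partial^\alpha\phi$. That converse requires the operator factorisation $\partial^\alpha=P_\alpha\circ(-\L_G)^k$ and an $L^2$ bound on $P_\alpha$ via the symbolic calculus of \cite{RTi}. You bypass this entirely by differentiating the Fourier series directly and using the elementary bound $\n{d\xi(X_k)}_{\mathrm{op}}\leq\lambda_{[\xi]}$ coming from the Casimir identity $\sum_k d\xi(X_k)^*d\xi(X_k)=\lambda_{[\xi]}^2 I$ (the paper quotes the weaker $\n{\sigma_{X_j}(\xi)}_{\mathrm{op}}\leq C_j\jp{\xi}$ from \cite{RTi}). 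Your route is shorter and self-contained; the paper's detour through Proposition~\ref{PROP:l} has the compensating advantage that the Laplacian characterisation is of independent interest and is reused later (Definition~\ref{DEF:dual}) to describe the topology on $\gamma_s(G)$ needed for the ultradistribution analysis.
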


Expressions appearing in the definitions can be taken as seminorms, and the spaces
are equipped with  the inductive and projective topologies, respectively\footnote{See
also Definition \ref{DEF:dual} for an equivalent formulation.}.
We now turn to ultradistributions.

\begin{defi} 
The space of continuous linear functionals on 
$\gamma_s(G)\left(\textrm{or}~\gamma_{(s)}(G)\right)$ 
is called the space of ultradistributions and is denoted 
by $\gamma_s'(G)\left(\textrm{or}~\gamma_{(s)}'(G)\right),$
respectively.
\end{defi}

For any $v\in \gamma_{s}'(G)\left(\textrm{or}~\gamma_{(s)}'(G)\right)$, 
for $[\xi]\in\Gh$, we define the Fourier coefficients 
$\widehat{v}(\xi):=\jp{v,\xi^{\ast}}\equiv v(\xi^{*}).$
These are well-defined since $G$ is compact and hence $\xi(x)$ are actually
analytic.

\begin{thm}\label{THM:duals}
Let $1\leq s<\infty$. \\
{\rm\bf (R)} We have $v \in \gamma_s'(G)$ 
if and only if for every $B>0$ there exists $K_B>0$ such that
\begin{equation}\label{EQ:ade1}
\|\widehat{v}(\xi)\|_{\HS}  \leq K_B e^{B\left\langle \xi  \right\rangle ^{\frac{1} {s}} }
\end{equation}
holds for all $ [\xi] \in \Gh$. \\
{\rm\bf (B)} We have $v \in \gamma_{(s)}'(G)$ 
if and only if there exist $B>0$ and $K_{B}>0$ such that
\eqref{EQ:ade1}
holds for all $ [\xi] \in \Gh$.
\end{thm}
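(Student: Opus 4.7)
The plan is to build on Theorem \ref{THM:Gevrey}, which identifies $\gamma_s(G)$ and $\gamma_{(s)}(G)$ with sequence spaces of matrix-valued sequences $\sigma(\xi)=\widehat\phi(\xi)$ satisfying, respectively, the Roumieu (some $B,K$) and Beurling (every $B$, some $K_B$) decay conditions. Writing the natural family of seminorms as $\n{\phi}_B := \sup_{[\xi]\in\Gh} e^{B\jp{\xi}^{1/s}}\n{\widehat\phi(\xi)}_{\HS}$, the Roumieu space carries the inductive limit topology and the Beurling space the projective limit topology in $B$, so continuity of a linear functional $v$ translates into pointwise Banach-space estimates which can then be tested against explicit functions.

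For the sufficiency direction in both cases, assume the stated bound on $\n{\widehat v(\xi)}_{\HS}$ and define
\[
v(\phi) := \sumxi d_\xi \Tr\p{\widehat\phi(\xi)\,\widehat v(\xi)^{*}}.
\]
Absolute convergence follows by pairing the exponential decay of $\widehat\phi$ with the exponential growth of $\widehat v$ with a strictly larger constant on the decay side: in the Roumieu case one uses the single $B_0$ provided by $\phi\in\gamma_s(G)$, and in the Beurling case one uses $\phi\in\gamma_{(s)}(G)$ with any $B_0$ larger than the $B$ prescribed for $v$. The factor $d_\xi$ is polynomial in $\jp{\xi}$ by Weyl's asymptotics and is absorbed into the exponential. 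That this $v$ really has the prescribed Fourier coefficients $\widehat v(\xi)=v(\xi^{*})$ follows from Peter--Weyl orthogonality.

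For the necessity direction, the key observation is that every matrix coefficient $\overline{\xi_{ji}}$ is itself an analytic test function whose group Fourier transform is supported at the single class $[\overline{\xi}]$ (the contragradient representation) and has a unique non-zero matrix entry of value $1/d_\xi$; since $\lambda_{[\overline{\xi}]}=\lambda_{[\xi]}$, this gives $\n{\overline{\xi_{ji}}}_B = \frac{1}{d_\xi} e^{B\jp{\xi}^{1/s}}$ for every $B>0$. In the Roumieu case, continuity on the inductive limit provides, for every $B>0$, a constant $C_B$ with $|v(\phi)|\leq C_B\n{\phi}_B$; applied to $\phi=\overline{\xi_{ji}}$ this yields $|\widehat v(\xi)_{ij}|=|v(\overline{\xi_{ji}})|\leq (C_B/d_\xi)e^{B\jp{\xi}^{1/s}}$, and summing the squares over $1\leq i,j\leq d_\xi$ produces $\n{\widehat v(\xi)}_{\HS}\leq C_B e^{B\jp{\xi}^{1/s}}$. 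In the Beurling case, continuity on the projective limit provides a single $B$ and $C$ with $|v(\phi)|\leq C\n{\phi}_B$, and the same computation gives the estimate with this single $B$.

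The principal obstacle will be to verify that the Fourier-side weighted $\ell^\infty$ seminorms $\n{\cdot}_B$ really do generate the standard inductive/projective limit topologies on $\gamma_s(G)$ and $\gamma_{(s)}(G)$ coming from Definition \ref{DEF:GR} and its Beurling analogue, since only then does continuity of $v$ reduce to the Banach-space estimates used above. As the introduction signals, this identification is reached by first computing the $\alpha$-dual of the corresponding K\"othe sequence space (Section \ref{SEC:alpha}) and then showing that the Gevrey spaces are perfect, so that the $\alpha$-dual and the topological dual coincide; the argument above then realises the inclusion of the topological dual in the sequence space described by \eqref{EQ:ade1}.
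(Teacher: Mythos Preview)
Your sufficiency direction matches the paper's (Theorem~\ref{THM:equiv}, ``If'' parts): define the functional by the Fourier pairing and verify absolute convergence and continuity using Theorem~\ref{THM:Gevrey}, Lemma~\ref{L:ser} and~\eqref{EQ:exp}.

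The necessity direction has a real gap. You phrase continuity of $v$ as $|v(\phi)|\leq C_B\n{\phi}_B$ for the Fourier-side seminorms $\n{\phi}_B=\sup_{[\xi]}e^{B\jp{\xi}^{1/s}}\n{\widehat\phi(\xi)}_{\HS}$, which is only legitimate once you know that these seminorms generate the \emph{same} inductive (resp.\ projective) limit topology as the derivative seminorms of Definition~\ref{DEF:GR}. You acknowledge this is the ``principal obstacle'' but then write that it is resolved because ``the Gevrey spaces are perfect, so that the $\alpha$-dual and the topological dual coincide''. That implication is false in general ($\ell^\infty$ is perfect, yet $(\ell^\infty)'\supsetneq\ell^1$), and in fact the paper's perfectness theorem plays no role in the proof of Theorem~\ref{THM:duals}: the paper combines Theorem~\ref{THM:aduals} (the $\alpha$-dual characterisation) with Theorem~\ref{THM:equiv} (topological dual $=$ $\alpha$-dual), and the latter is proved \emph{directly} from Definition~\ref{DEF:dual}, not via any identification of Fourier-side and derivative-side topologies.

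Concretely, in the ``Only if'' parts of Theorem~\ref{THM:equiv} the paper uses the Laplacian-based seminorms of Definition~\ref{DEF:dual} (legitimised by Proposition~\ref{PROP:l}), tests $v$ against $\phi=\xi^*_{ij}$, and uses $(-\mathcal L_G)^{|\alpha|/2}\xi^*_{ij}=|\xi|^{|\alpha|}\xi^*_{ij}$ to obtain
\[
\n{\widehat v(\xi)}_{\HS}\leq C_\epsilon\, d_\xi^{3/2}\sup_\alpha\epsilon^{|\alpha|}(\alpha!)^{-s}\jp{\xi}^{|\alpha|}\leq C_\epsilon\, d_\xi^{3/2}e^{ns\,\epsilon^{1/s}\jp{\xi}^{1/s}},
\]
hence~\eqref{EQ:ade1} after absorbing $d_\xi^{3/2}$ via~\eqref{EQ:exp}. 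Your route can be repaired, but only by actually proving the topology identification---tracking the explicit $A\leftrightarrow B$ correspondence in the proof of Theorem~\ref{THM:Gevrey} to show that each Banach step $X_A$ of one system embeds continuously into some step $Y_B$ of the other and conversely---rather than by invoking perfectness.
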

The proof of Theorem \ref{THM:duals}
follows from the characterisation of $\alpha$-duals of\footnote{The characterisation of
$\alpha$-duals is valid for all $0<s<\infty$.}  the
Gevrey spaces in Theorem \ref{THM:aduals}
and the equivalence of the topological duals and $\alpha$-duals 
in Theorem \ref{THM:equiv}.

\medskip

The result on groups implies the corresponding characterisation on
compact homogeneous spaces $M$. First we fix the notation.
Let $G$ be a compact motion group of $M$ and let $H$ be the stationary
subgroup of some point.
Alternatively, we can start with a compact Lie group $G$ with
a closed subgroup $H$.  
The homogeneous space $M=G/H$ is an analytic manifold in a canonical way
(see, for example, \cite{Br} or \cite{St} as textbooks on this subject).
We normalise measures so
that the measure on $H$ is a probability one.
Typical examples are the spheres ${\mathbb S}^{n}={\rm SO}(n+1)/{\rm SO}(n)$
or complex spheres
(complex projective spaces) $\mathbb P\mathbb C^{n}={\rm SU}(n+1)/{\rm SU}(n)$.

We denote by $\Gh_{0}$ the subset of $\Gh$ of representations that are
class I with respect to the subgroup  $H$. This means that 
$[\xi]\in\Gh_{0}$ if $\xi$ has at least one non-zero invariant vector $a$ with respect
to $H$, i.e. that
$\xi(h)a=a$ for all $h\in H.$ 
Let $\Hcal_{\xi}$ denote the representation space of $\xi(x):\Hcal_{\xi}\to\Hcal_{\xi}$ 
and let $\B_{\xi}$ be the space of these invariant vectors.
Let $k_{\xi}=\dim\B_{\xi}.$
We fix an orthonormal basis of $\Hcal_{\xi}$ so that
its first $k_{\xi}$ vectors are the basis of $B_{\xi}.$
The matrix elements $\xi_{ij}(x)$, $1\leq j\leq k_{\xi}$,
are invariant under the right shifts by $H$.
We refer to \cite{Vi} for the details of these constructions.

We can identify Gevrey functions on $M=G/H$ with Gevrey functions on 
$G$ which are constant on left cosets with respect to $H$. 
Here we will restrict to $s\geq 1$ to see the equivalence of spaces
using their localisation. This identification gives rise to the 
corresponding identification of ultradistributions.
Thus, for a function $f\in \gamma_{s}(M)$ we can
recover it by the Fourier series of its canonical
lifting $\wt{f}(g):=f(gH)$ to $G$,
$\wt{f}\in \gamma_{s}(G)$, and the Fourier
coefficients satisfy  $\widehat{\wt{f}}(\xi)=0$ for all representations
with $[\xi]\not\in\Gh_{0}$.
Also, for class I representations $[\xi]\in\Gh_{0}$ we have 
$\widehat{\wt{f}}(\xi)_{{ij}}=0$ for $i>k_{\xi}$.

With this, we can write the Fourier series of $f$ (or of $\wt{f}$, but as
we said, from now on we will identify these and denote both by $f$) in terms of
the spherical functions $\xi_{ij}$ of the representations
$\xi$, $[\xi]\in\Gh_{0}$, with respect to the subgroup
$H$. 
Namely, the Fourier series \eqref{EQ:FS} becomes
\begin{equation}\label{EQ:FSh}
f(x)=\sum_{[\xi]\in\Gh_{0}} d_{\xi} \sum_{i=1}^{d_{\xi}}\sum_{j=1}^{k_{\xi}}
\widehat{f}(\xi)_{ji}\xi_{ij}(x).
\end{equation}
In view of this,
we will say that the collection of Fourier coefficients
$\{\widehat{\phi}(\xi)_{ij}: [\xi]\in\Gh, 1\leq i,j\leq d_{\xi}\}$ is 
of class I with respect to $H$ if
$\widehat{\phi}(\xi)_{ij}=0$ whenever $[\xi]\not\in\Gh_{0}$ or
$i>k_{\xi}.$ By the above discussion, if the collection of Fourier
coefficients is of class I with respect to $H$, then the expressions
\eqref{EQ:FS} and \eqref{EQ:FSh} coincide and yield a function
$f$ such that $f(xh)=f(h)$ for all $h\in H$, so that this function becomes
a function on the homogeneous space $G/H$. The same applies
to (ultra)distributions with the standard distributional interpretation.
With these identifications, Theorem \ref{THM:Gevrey} immediately implies

\begin{thm}\label{THM:Gevreyh}
Let $1\leq s<\infty$. \\
{\rm \bf (R)} We have $\phi\in \gamma_{s}(G/H)$ if and only if 
its Fourier coefficients are of class I with respect to $H$ and, moreover,
there exist $B>0$ and $K>0$
such that
\begin{equation}\label{EQ:GRh}
||\widehat{\phi}(\xi)||_{\HS}\leq K e^{-B\jp{\xi}^{1/s}}
\end{equation}
holds for all $[\xi]\in \Gh_{0}.$\\
{\rm\bf (B)} We have $\phi\in \gamma_{(s)}(G)$ if and only if 
its Fourier coefficients are of class I with respect to $H$ and, moreover,
for every $B>0$ there exists $K_B>0$ such that 
\begin{equation}\label{EQ:GBh}
||\widehat{\phi}(\xi)||_{\HS}\leq K_B e^{-B\jp{\xi}^{1/s}}
\end{equation}
holds for all $[\xi]\in \Gh_{0}.$
\end{thm}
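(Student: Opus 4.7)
The plan is to reduce everything to Theorem \ref{THM:Gevrey} by the lifting $\pi^{*}: f \mapsto \tilde{f} := f\circ\pi$, where $\pi:G\to G/H$ is the canonical projection. The discussion preceding the theorem already identifies $\gamma_{s}(M)$ with the subspace of right $H$-invariant functions in $\gamma_{s}(G)$ (and analogously for the Beurling class), valid for $s\geq 1$ because on analytic manifolds the Gevrey class is invariantly defined only in this range; this rests on the chain rule together with the analyticity of $\pi$, and I would simply quote it. So once the translation between right $H$-invariance of $\tilde{f}$ and the class I condition on $\widehat{\tilde{f}}$ is made, both (R) and (B) fall out immediately from Theorem \ref{THM:Gevrey} applied to $\tilde{f}\in\gamma_{s}(G)$ (resp.\ $\gamma_{(s)}(G)$).

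The one substantive step is the Fourier-side translation of right $H$-invariance. For any $\tilde{f}\in C^{\infty}(G)$ that is right $H$-invariant, I would use Fubini together with the right-translation formula for $\xi$ to compute
\[
\widehat{\tilde{f}}(\xi)=\int_{H}\int_{G}\tilde{f}(xh)\xi(x)^{*}\,dx\,dh
=\left(\int_{H}\xi(h)\,dh\right)\widehat{\tilde{f}}(\xi)=P_{\xi}\,\widehat{\tilde{f}}(\xi),
\]
where $P_{\xi}=\int_{H}\xi(h)\,dh$ is the orthogonal projection of $\Hcal_{\xi}$ onto the space $\B_{\xi}$ of $H$-invariant vectors (Schur-type averaging). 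In the chosen basis where the first $k_{\xi}$ vectors span $\B_{\xi}$, this projection is the diagonal matrix with $1$'s in the first $k_{\xi}$ entries and $0$'s thereafter, so the identity $\widehat{\tilde{f}}(\xi)=P_{\xi}\widehat{\tilde{f}}(\xi)$ is exactly the statement that $\widehat{\tilde{f}}(\xi)_{ij}=0$ for $i>k_{\xi}$, and that $\widehat{\tilde{f}}(\xi)=0$ whenever $[\xi]\notin\Gh_{0}$ (because then $\B_{\xi}=\{0\}$ and $P_{\xi}=0$). Conversely, any Fourier series \eqref{EQ:FSh} assembled from a class I family of coefficients defines a function which is right $H$-invariant, by a direct computation using $\xi(xh)=\xi(x)\xi(h)$ and the fact that $\xi(h)$ fixes the invariant vectors among the first $k_{\xi}$ basis vectors.

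With this equivalence in hand, the two directions of the theorem are routine. For the forward direction: given $\phi\in\gamma_{s}(G/H)$, its lift $\tilde{\phi}$ lies in $\gamma_{s}(G)$, hence Theorem \ref{THM:Gevrey}(R) yields the estimate \eqref{EQ:GRh}, which by the class I property is non-trivial only on $[\xi]\in\Gh_{0}$; similarly for (B). For the converse: given a collection of Fourier coefficients of class I satisfying \eqref{EQ:GRh} (resp.\ \eqref{EQ:GBh}), extending by $0$ outside $\Gh_{0}$ preserves the bound and, by Theorem \ref{THM:Gevrey}, produces a function $\tilde{\phi}\in\gamma_{s}(G)$ (resp.\ $\gamma_{(s)}(G)$) which is right $H$-invariant by the computation of the previous paragraph, so descends to $\phi\in\gamma_{s}(G/H)$ (resp.\ $\gamma_{(s)}(G/H)$).

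The main potential obstacle is not in the Fourier analysis but in the preliminary identification $\gamma_{s}(G/H)\simeq\{\tilde{f}\in\gamma_{s}(G): \tilde{f}\text{ right }H\text{-invariant}\}$: one must check that the Gevrey regularity survives both $\pi^{*}$ and the passage to the quotient. However, this is a purely local statement, following from the analyticity of local sections of $\pi$ and the stability of Gevrey classes ($s\geq 1$) under composition with analytic maps, and I would invoke it without further proof as already indicated in the text. Thus the entire argument is truly a corollary of Theorem \ref{THM:Gevrey} once the class I condition is identified with right $H$-invariance.
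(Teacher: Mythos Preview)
Your proposal is correct and follows the same route as the paper: the authors do not give a separate proof but simply state that, after the identifications described before the theorem (lifting $\phi$ to $\tilde\phi\in\gamma_{s}(G)$ and recognising right $H$-invariance as the class~I condition on Fourier coefficients), Theorem~\ref{THM:Gevrey} applies verbatim. Your write-up in fact supplies more detail than the paper does, namely the explicit computation $\widehat{\tilde f}(\xi)=P_{\xi}\widehat{\tilde f}(\xi)$ via the averaging projector $P_{\xi}=\int_{H}\xi(h)\,dh$, which the paper defers to the reference~\cite{Vi}.
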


It would be possible to extend Theorem \ref{THM:Gevreyh} to the range
$0<s<\infty$ by adopting Definition \ref{DEF:GR} starting with a frame of vector
fields on $M$, but instead of obtaining the result immediately from
Theorem \ref{THM:Gevrey} we would have to go again through arguments similar to
those used to prove Theorem \ref{THM:Gevrey}. Since we are interested in characterising 
the standard invariantly defined
Gevrey spaces we decided not to lengthen the proof in this way.
On the other hand, it is also possible to prove the characterisations on homogeneous
spaces $G/H$ first and then obtain those on the group $G$ by taking $H$ to be trivial.
However, some steps would become more technical since we would have to
deal with frames of vector fields instead of the basis of left-invariant
vector fields on $G$, and elements of the symbolic calculus used in the proof would
become more complicated.

We also have the ultradistributional result following from Theorem
\ref{THM:duals}.

\begin{thm}\label{THM:dualsh}
Let $1\leq s<\infty$. \\
{\rm\bf (R)} We have $v \in \gamma_s'(G/H)$ 
if and only if 
its Fourier coefficients are of class I with respect to $H$ and, moreover,
for every $B>0$ there exists $K_B>0$ such that
\begin{equation}\label{EQ:ade1h}
\|\widehat{v}(\xi)\|_{\HS}  \leq K_B e^{B\left\langle \xi  \right\rangle ^{\frac{1} {s}} }
\end{equation}
holds for all $ [\xi] \in \Gh_{0}$. \\
{\rm\bf (B)} We have $v \in \gamma_{(s)}'(G/H)$ 
if and only if 
its Fourier coefficients are of class I with respect to $H$ and, moreover,
there exist $B>0$ and $K_{B}>0$ such that
\eqref{EQ:ade1h}
holds for all $[\xi] \in\Gh_{0}$.
\end{thm}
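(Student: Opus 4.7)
The plan is to deduce Theorem \ref{THM:dualsh} from Theorem \ref{THM:duals} on $G$ by identifying $\gamma_s'(G/H)$ (resp.\ $\gamma_{(s)}'(G/H)$) with the subspace of ultradistributions on $G$ that are right-$H$-invariant, and then matching right-$H$-invariance of such an ultradistribution with the class I condition on its Fourier coefficients.

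First I would set up the identification of test functions and duals. The $H$-averaging operator $P\phi(gH) := \int_H \phi(gh)\,dh$ maps $\gamma_s(G)$ continuously onto $\gamma_s(G/H)$: left-invariant differential operators $\partial^{\alpha}$ commute with right translations by $h\in H$, and the Haar measure on $H$ has total mass $1$, so the Gevrey seminorm estimates from Definition \ref{DEF:GR} transfer to $P\phi$ with the same constants (the Beurling case is identical). The dual pairing $\langle \tilde v, \phi\rangle := \langle v, P\phi\rangle$ then sends $v\in\gamma_s'(G/H)$ to $\tilde v\in\gamma_s'(G)$ which is right-$H$-invariant, meaning $\tilde v(\phi(\cdot\, h)) = \tilde v(\phi)$ for every $h\in H$. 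Conversely, every right-$H$-invariant $\tilde v\in\gamma_s'(G)$ descends uniquely to some $v\in\gamma_s'(G/H)$, since $P$ is surjective with kernel equal to the functions of zero $H$-average.

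Next I would match right-$H$-invariance of $\tilde v$ with the class I condition on $\widehat{\tilde v}$. Using
\[
\widehat{\tilde v}(\xi)_{ij} = \tilde v(\overline{\xi_{ji}}), \qquad \overline{\xi_{ji}(gh)} = \sum_k \overline{\xi_{ki}(h)}\,\overline{\xi_{jk}(g)},
\]
the right-$H$-invariance $\tilde v(R_h\phi)=\tilde v(\phi)$ rewrites as the matrix identity $\xi(h)\,\widehat{\tilde v}(\xi) = \widehat{\tilde v}(\xi)$ for every $h\in H$, which says that each column of $\widehat{\tilde v}(\xi)$ lies in $\B_\xi$. Integrating over $H$ this becomes $\Pi_H\,\widehat{\tilde v}(\xi) = \widehat{\tilde v}(\xi)$, where $\Pi_H := \int_H \xi(h)\,dh$ is the orthogonal projection onto $\B_\xi$; in the chosen basis this forces $\widehat{\tilde v}(\xi)_{ij}=0$ whenever $i>k_\xi$, and $\widehat{\tilde v}(\xi)=0$ whenever $[\xi]\notin\Gh_{0}$. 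The same computation in reverse yields the converse. Moreover, for $[\xi]\in\Gh_{0}$ and $i\le k_\xi$ the nonzero entries $\widehat{\tilde v}(\xi)_{ij}$ agree with the Fourier coefficients of $v$ defined from its action on the spherical functions, because $P$ acts as the identity on the already right-$H$-invariant $\overline{\xi_{ji}}$ with $i\le k_\xi$.

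With this dictionary in place, Theorem \ref{THM:duals} applied to $\tilde v$ furnishes \eqref{EQ:ade1} for all $[\xi]\in\Gh$, which by the class I property reduces to \eqref{EQ:ade1h} for $[\xi]\in\Gh_{0}$. In the reverse direction, coefficients on $\Gh_{0}$ satisfying \eqref{EQ:ade1h} and the class I convention extend by zero to a family on $\Gh$ satisfying \eqref{EQ:ade1}; Theorem \ref{THM:duals} produces $\tilde v\in\gamma_s'(G)$, whose class I coefficients make it right-$H$-invariant, and we descend to $v\in\gamma_s'(G/H)$. The Beurling case is entirely parallel. The main obstacle is the equivalence between right-$H$-invariance of $\tilde v$ and the class I property of $\widehat{\tilde v}$, but once the $\B_\xi$-block decomposition is unwound this reduces to unitarity of $\xi(h)$ and the defining property of $\Gh_{0}$.
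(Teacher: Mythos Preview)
Your proposal is correct and follows the same approach as the paper: the paper states that Theorem~\ref{THM:dualsh} follows from Theorem~\ref{THM:duals} via the identification (discussed just before Theorem~\ref{THM:Gevreyh}) of Gevrey functions and ultradistributions on $G/H$ with right-$H$-invariant ones on $G$, and does not spell out a proof. You have simply filled in the details of that identification---the $H$-averaging operator, the equivalence between right-$H$-invariance of $\tilde v$ and the class~I condition on $\widehat{\tilde v}$ via the matrix identity $\xi(h)\widehat{\tilde v}(\xi)=\widehat{\tilde v}(\xi)$---which is exactly what the paper leaves implicit.
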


Finally, we remark that in the harmonic analysis on compact Lie groups sometimes
another version of $\ell^{p}(\Gh)$ spaces appears using Schatten $p$-norms.
However, in the context of Gevrey spaces and ultradistributions eventual results hold
for all such norms. Indeed, given our results with the Hilbert-Schmidt norm,
by an argument similar to that of Lemma \ref{L:c} below,
we can put any Schatten norm $\|\cdot\|_{S_{p}}$, $1\leq p\leq\infty,$ 
instead of the Hilbert-Schmidt norm $\|\cdot\|_{\HS}$ in any of
our characterisations and they still continue to hold.

\section{Gevrey classes on compact Lie groups}

We will need two relations between dimensions of representations and the
eigenvalues of the Laplace-Beltrami operator. 
On one hand, it follows from the
Weyl character formula that
\begin{equation}\label{EQ:dxi}
d_{\xi}\leq C\jp{\xi}^{\frac{n-{\rm rank}G}{2}}\leq C\jp{\xi}^{\frac{n}{2}},
\end{equation}
with the latter\footnote{Namely, the inequality $d_{\xi}\leq   C\jp{\xi}^{\frac{n}{2}}$.}
also following directly from the Weyl 
asymptotic formula for the eigenvalue
counting function for $\L_{G}$, see e.g. \cite[Prop. 10.3.19]{RTb}. 
This implies, in particular, that for any $0\leq p<\infty$ and 
any $s>0$ and $B>0$ we have
\begin{equation}\label{EQ:exp}
 \sup_{[\xi]\in\Gh} d_{\xi}^p e^{-B\jp{\xi}^{1/s}}<\infty.
\end{equation}
On the other hand, the following
convergence for the series will be useful for us:

\begin{lemma} \label{L:series}
We have 
$\sumxi \ d_{\xi}^{2}\ \jp{\xi}^{-2t}<\infty$ if and only if $t>\frac{n}{2}.$
\end{lemma}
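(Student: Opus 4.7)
The plan is to identify the series with the trace-like spectral sum for powers of $(I-\L_G)^{-1}$, and then to reduce the question to the Weyl asymptotic law for the eigenvalue counting function of $\L_G$. By the Peter-Weyl theorem, the functions $\{\sqrt{d_\xi}\,\xi_{ij}\}_{[\xi]\in\Gh,\, 1\leq i,j\leq d_\xi}$ form an orthonormal basis of $L^2(G)$, so the eigenvalue $\lambda_{[\xi]}^2$ of $-\L_G$ occurs with multiplicity exactly $d_\xi^2$. Consequently, the counting function for the eigenvalues of the elliptic first-order pseudo-differential operator $(I-\L_G)^{1/2}$ is precisely
$$N(\lambda) \; := \; \#\{\text{eigenvalues of } (I-\L_G)^{1/2} \leq \lambda\} \; = \; \sum_{\jp{\xi}\leq \lambda} d_\xi^2.$$

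The first step is to invoke the Weyl asymptotic formula (cf.\ \cite[Prop.~10.3.19]{RTb}), which furnishes positive constants $c_1, c_2$ and $\lambda_0$ with
$$c_1 \lambda^n \; \leq \; N(\lambda) \; \leq \; c_2 \lambda^n \qquad \text{for all } \lambda \geq \lambda_0.$$
Next I would decompose the series in dyadic shells in $\jp{\xi}$:
$$\sum_{[\xi]\in\Gh} d_\xi^2 \jp{\xi}^{-2t} \; = \; \sum_{k=0}^{\infty} \sum_{2^k \leq \jp{\xi} < 2^{k+1}} d_\xi^2 \jp{\xi}^{-2t}.$$
The inner sum is sandwiched between $2^{-2t(k+1)}(N(2^{k+1}) - N(2^k))$ and $2^{-2tk}(N(2^{k+1}) - N(2^k))$, and the Weyl bounds above give the two-sided estimate $N(2^{k+1}) - N(2^k) \asymp 2^{kn}$ for large $k$. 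Hence each dyadic block is comparable to $2^{k(n-2t)}$, and the total series is comparable to the geometric series $\sum_{k\geq 0} 2^{k(n-2t)}$, which converges precisely when $n - 2t < 0$. This yields both implications at once.

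The only non-elementary ingredient is the Weyl asymptotic itself. The upper half $N(\lambda) \leq c_2 \lambda^n$ is essentially the content of \eqref{EQ:dxi} (summed over $[\xi]$ with $\jp{\xi} \leq \lambda$), which would already give the sufficient direction $t > n/2 \Rightarrow$ convergence via $d_\xi^2 \leq C\jp{\xi}^n$ combined with Abel summation; the lower bound $N(\lambda) \geq c_1 \lambda^n$, needed for the necessary direction, is the real work but is a classical theorem we can cite rather than reprove. No further obstacles are expected.
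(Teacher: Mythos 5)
Your argument is correct, but it takes a genuinely different route from the paper. The paper's proof is a two-line reduction: since $\widehat{\delta}(\xi)=I_{d_{\xi}}$ and $\|I_{d_{\xi}}\|_{\HS}^{2}=d_{\xi}$, the Plancherel identity gives $\sumxi d_{\xi}^{2}\jp{\xi}^{-2t}=\|\delta\|_{H^{-t}(G)}^{2}$, and the localisation of $H^{-t}(G)$ then says this is finite iff $t>n/2$. You instead read the sum as a Stieltjes integral against the eigenvalue counting function $N(\lambda)=\sum_{\jp{\xi}\leq\lambda}d_{\xi}^{2}$ of $(I-\L_G)^{1/2}$ (the multiplicity $d_{\xi}^{2}$ being correct by Peter--Weyl) and invoke the Weyl law $N(\lambda)\sim c\,\lambda^{n}$ plus a dyadic decomposition. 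Both proofs offload the real content to an equivalent classical fact -- the paper to "$\delta\in H^{-t}$ iff $t>n/2$", you to the Weyl asymptotics -- so neither is more elementary; the paper's is shorter, while yours makes the spectral mechanism explicit and would generalise to other weights $f(\jp{\xi})$ in place of $\jp{\xi}^{-2t}$. One small imprecision: the two-sided bound $c_{1}\lambda^{n}\leq N(\lambda)\leq c_{2}\lambda^{n}$ that you wrote down does \emph{not} by itself give a lower bound on the shell counts $N(2^{k+1})-N(2^{k})$ (the constants could conspire to make some shells empty), and the endpoint $t=n/2$ is not covered by a naive shell-by-shell comparison. You need either the genuine asymptotic $N(\lambda)/\lambda^{n}\to c_{0}$, or an Abel summation of the lower bound $N(\lambda)\geq c_{1}\lambda^{n}$ against the decreasing weights $2^{-2tk}$; either fix is one line, so this is a wrinkle rather than a gap.
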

\begin{proof}
We notice that for the $\delta$-distribution at the unit element of the group, 
$\widehat{\delta}(\xi)=I_{d_{\xi}}$
is the identity matrix of size $d_{\xi}\times d_{\xi}$. Hence, in view of
\eqref{EQ:S1} and \eqref{EQ:S2}, we can write
$$
\sumxi d_{\xi}^{2} \jp{\xi}^{-2t}=
\sumxi d_{\xi} \jp{\xi}^{-2t}\|\widehat{\delta}(\xi)\|_{\HS}^{2}=
\|(I-\L_{G})^{-t/2}\delta\|_{L^{2}(G)}^{2}=\|\delta\|_{H^{-t}(G)}^{2}.
$$
By using the localisation of $H^{-t}(G)$ this is finite if and only if $t>n/2.$
\end{proof}

We denote by $\Ghs$ the set of representations from $\Gh$ excluding the
trivial representation. For $[\xi]\in\Gh$, we denote 
$|\xi|:=\lambda_{\xi}\geq 0$, the eigenvalue of the operator
$(-\L_{G})^{1/2}$ corresponding to the representation $\xi.$
For $[\xi]\in\Ghs$ we have $|\xi|>0$ (see e.g. \cite{F}),
and for $[\xi]\in\Gh\backslash\Ghs$ we have $|\xi|=0.$
From the definition, we have $|\xi|\leq \jp{\xi}.$ On the other hand,
let $\lambda_{1}^{2}>0$ be the smallest positive eigenvalue of $-\L_{G}.$
Then, for $[\xi]\in\Ghs$ we have $\lambda_{\xi}\geq \lambda_{1}$, implying
$$
1+\lambda_{\xi}^{2}\leq \p{\frac{1}{\lambda_{1}^{2}}+1}\lambda_{\xi}^{2},
$$
so that altogether we record the inequality
\begin{equation}\label{EQ:ll}
|\xi|\leq \jp{\xi}\leq \p{1+\frac{1}{\lambda_{1}^{2}}}^{1/2}|\xi|,
\quad \textrm{ for all } [\xi]\in\Ghs.
\end{equation}

We will need the following simple lemma which we prove for completeness.
Let $a\in\C^{d\times d}$ be a matrix, and for $1\leq p<\infty$ we 
denote by $\ell^p(\C)$ the space of such matrices with the norm
$$
 \|a\|_{\ell^p(\C)}=\p{\sum_{i,j=1}^d |a_{ij}|^p}^{1/p},
$$
and for $p=\infty$, 
$ \|a\|_{\ell^\infty(\C)}=\sup_{1\leq i,j\leq d} |a_{ij}|.$
We note that $\|a\|_{\ell^2(\C)}=\|a\|_{\HS}.$
We adopt the usual convention $\frac{c}{\infty}=0$ for any $c\in\mathbb R.$
\begin{lemma} \label{L:c}
Let $1\leq p< q\leq\infty$ and let $a\in\C^{d\times d}.$ Then we have
\begin{equation}\label{EQ:in}
 \|a\|_{\ell^p(\C)}\leq d^{2\p{\frac1p-\frac1q}}\|a\|_{\ell^q(\C)}
 \quad\textrm{ and } \quad
 \|a\|_{\ell^q(\C)}\leq d^{\frac{2}{q}}\|a\|_{\ell^p(\C)}.
\end{equation} 
\end{lemma}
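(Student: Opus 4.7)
My plan is to prove both inequalities by standard finite-dimensional $\ell^p$-comparisons on the $d^2$ matrix entries equipped with counting measure. Nothing deep is required; the only mild subtlety is treating $q = \infty$ uniformly via the paper's convention $c/\infty = 0$.

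For the first inequality I would apply H\"older's inequality to the identity $\sum_{i,j} |a_{ij}|^p = \sum_{i,j} |a_{ij}|^p \cdot 1$ with conjugate exponents $q/p$ and $q/(q-p)$ (valid since $p < q < \infty$). The $|a_{ij}|^p$-factor produces $\|a\|_{\ell^q(\C)}^p$ and the constant $1$-factor produces $(d^2)^{(q-p)/q}$; taking $p$-th roots gives exactly the stated $d^{2(1/p-1/q)}\|a\|_{\ell^q(\C)}$. For the endpoint $q = \infty$ the argument degenerates into bounding each of the $d^2$ entries of $\sum |a_{ij}|^p$ by $\|a\|_{\ell^\infty(\C)}^p$, which agrees with the formula once one sets $1/q = 0$.

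For the second inequality I plan to prove the sharper constant-free estimate $\|a\|_{\ell^q(\C)} \leq \|a\|_{\ell^p(\C)}$, from which the stated bound follows immediately because $d^{2/q} \geq 1$. The key observation is the trivial pointwise comparison $\|a\|_{\ell^\infty(\C)} \leq \|a\|_{\ell^p(\C)}$ (each $|a_{ij}|$ is dominated by the full $\ell^p$-norm). Splitting $|a_{ij}|^q = |a_{ij}|^{q-p}\, |a_{ij}|^p$ and bounding the first factor uniformly in $(i,j)$ by $\|a\|_{\ell^p(\C)}^{q-p}$, the sum telescopes to $\|a\|_{\ell^q(\C)}^q \leq \|a\|_{\ell^p(\C)}^q$. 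The case $q = \infty$ again reduces directly to $\|a\|_{\ell^\infty(\C)} \leq \|a\|_{\ell^p(\C)}$, now matched by $d^{2/\infty} = 1$.

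There is no genuine obstacle: the whole lemma is a bookkeeping exercise with H\"older exponents and endpoint conventions. The loose factor $d^{2/q}$ in the second inequality (rather than the sharp constant $1$) is presumably chosen because this is the form convenient for later use in comparing matrix-valued Fourier coefficients against the Hilbert--Schmidt norm.
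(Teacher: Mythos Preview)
Your proof is correct and follows essentially the same approach as the paper: the first inequality is proved identically via H\"older with exponents $q/p$ and $q/(q-p)$, and the endpoint $q=\infty$ is handled the same way. For the second inequality the paper bounds each term $|a_{ij}|^q \leq \|a\|_{\ell^p(\C)}^q$ and sums over the $d^2$ entries to get the factor $d^{2/q}$ directly, whereas your splitting $|a_{ij}|^q = |a_{ij}|^{q-p}|a_{ij}|^p$ yields the sharper constant $1$; this is a cosmetic difference, and both arguments are the standard finite-dimensional $\ell^p$ comparisons.
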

\begin{proof}
For $q<\infty$, we apply H\"older's inequality with $r=\frac{q}{p}$ and
$r'=\frac{q}{q-p}$ to get
$$
 \|a\|_{\ell^p(\C)}^p=\sum_{i,j=1}^d |a_{ij}|^p\leq
 \p{\sum_{i,j=1}^d |a_{ij}|^{pr}}^{1/r}\p{\sum_{i,j=1}^d 1}^{1/r'}=
 \|a\|_{\ell^q(\C)}^{p} d^{2\frac{q-p}{q}},
$$
implying \eqref{EQ:in} for this range.
Conversely, we have
$$
 \|a\|_{\ell^q(\C)}^q=\sum_{i,j=1}^d |a_{ij}|^q\leq
 \sum_{i,j=1}^d \|a\|_{\ell^p(\C)}^q= d^2\|a\|_{\ell^p(\C)}^q,
$$
proving the other part of \eqref{EQ:in} for this range.
For $q=\infty$, we have
$ \|a\|_{\ell^p(\C)}\leq \p{\sum_{i,j=1}^d  \|a\|^{p}_{\ell^\infty(\C)}}^{1/p}\leq
\|a\|_{\ell^\infty(\C)} d^{2/p}.$ 
Conversely, we have trivially $\|a\|_{\ell^\infty(\C)}\leq \|a\|_{\ell^p(\C)},$
completing the proof.
\end{proof} 

We observe that the Gevrey spaces can be described in terms of $L^{2}$-norms,
and this will be useful to us in the sequel.
\begin{lemma}\label{L:gl2}
We have $\phi\in \gamma_{s}(G)$ if and only if there exist constants $A>0$ and $C>0$
such that for all multi-indices $\alpha$ we have
\begin{equation}\label{EQ:gl2}
\|\partial^{\alpha}\phi\|_{L^2}\leq 
C A^{|\alpha|}\left(\alpha !\right)^{s}.
\end{equation}
We also have $\phi\in \gamma_{(s)}(G)$ if and only if for every $A>0$ there exists $C_{A}>0$
such that for all multi-indices $\alpha$ we have
$$
\|\partial^{\alpha}\phi\|_{L^2}\leq 
C_{A} A^{|\alpha|}\left(\alpha !\right)^{s}.
$$
\end{lemma}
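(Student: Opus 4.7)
The plan is to prove the equivalence in both directions, where one implication is essentially trivial and the other uses the Sobolev embedding theorem on the compact manifold $G$.

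The easy direction is ``$L^\infty$ bound $\Rightarrow$ $L^2$ bound''. Since $G$ is compact and the Haar measure is normalized to be a probability measure, one has the trivial inclusion $\|f\|_{L^2(G)}\leq \|f\|_{L^\infty(G)}$, so any estimate of the form \eqref{EQ:GR} immediately gives \eqref{EQ:gl2} with the same constants, and analogously in the Beurling case.

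For the reverse direction, I would fix an integer $k>n/2$ so that the Sobolev embedding $H^{k}(G)\hookrightarrow L^{\infty}(G)$ holds; this is standard on a compact manifold. Combining this embedding with the norm equivalence $H^{k}(G)\simeq W^{k,2}(G)$ mentioned around \eqref{EQ:S2}, we get a constant $C_{k}$ depending only on $G$ and $k$ such that
\[
\|\partial^{\alpha}\phi\|_{L^{\infty}(G)}\leq C_{k}\sum_{|\beta|\leq k}\|\partial^{\beta}\partial^{\alpha}\phi\|_{L^{2}(G)}
\]
for every multi-index $\alpha$. Each composition $\partial^{\beta}\partial^{\alpha}$ is, by the paper's convention on left-invariant differential operators, one admissible realisation of $\partial^{\alpha+\beta}$, so the $L^{2}$-hypothesis applies to it.

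Next, I would invoke the combinatorial inequality $(\alpha+\beta)!\leq 2^{|\alpha|+|\beta|}\alpha!\,\beta!$, raised to the power $s$, to obtain
\[
\|\partial^{\alpha+\beta}\phi\|_{L^{2}(G)}\leq C A^{|\alpha|+|\beta|}((\alpha+\beta)!)^{s}\leq C (2^{s}A)^{|\alpha|}(\alpha!)^{s}\cdot (2^{s}A)^{|\beta|}(\beta!)^{s}.
\]
Summing over the finite range $|\beta|\leq k$ absorbs the $\beta$-dependent quantities into a new constant $C'=C'(k,s,A)$, yielding
\[
\|\partial^{\alpha}\phi\|_{L^{\infty}(G)}\leq C' (2^{s}A)^{|\alpha|}(\alpha!)^{s},
\]
which is precisely a bound of the form \eqref{EQ:GR} with new constants $C'$ and $A'=2^{s}A$. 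This establishes $\phi\in\gamma_{s}(G)$. For the Beurling version, given an arbitrary prescribed $A>0$, apply the $L^{2}$-hypothesis with the parameter $A/2^{s}$ in place of $A$; the same computation then yields $\|\partial^{\alpha}\phi\|_{L^{\infty}}\leq C_{A}A^{|\alpha|}(\alpha!)^{s}$, as required.

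There is no serious obstacle here; the only points to verify carefully are that $k>n/2$ suffices for the Sobolev embedding on the compact manifold $G$, and the convention that $\partial^{\beta}\partial^{\alpha}$ is a legitimate instance of $\partial^{\alpha+\beta}$ in the paper's notation, so that the hypothesis can be applied directly without juggling commutator terms. The rest is a routine combinatorial estimate together with summing a finite series of uniformly bounded terms.
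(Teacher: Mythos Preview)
Your proposal is correct and follows essentially the same route as the paper: the trivial direction via $\|f\|_{L^{2}}\leq\|f\|_{L^{\infty}}$, and the converse via Sobolev embedding $H^{k}\hookrightarrow L^{\infty}$ for $k>n/2$ applied to $\partial^{\alpha}\phi$, followed by a combinatorial factorial estimate. The only cosmetic differences are that the paper derives the Sobolev embedding explicitly from the group Fourier transform (via \eqref{EQ:HY} and Lemma~\ref{L:series}) rather than citing it, and uses the chain $(\alpha+\beta)!\leq(|\alpha|+k)!\leq 2^{|\alpha|+k}k!\,|\alpha|!\leq 2^{|\alpha|+k}k!\,n^{|\alpha|}\alpha!$ in place of your direct $(\alpha+\beta)!\leq 2^{|\alpha|+|\beta|}\alpha!\,\beta!$.
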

\begin{proof} 
We prove the Gevrey-Roumieu case (R) as the Gevrey-Beurling case (B) is similar.
For $\phi\in \gamma_{s}(G),$ \eqref{EQ:gl2} follows in view of the continuous embedding 
$L^{\infty}(G)\subset L^{2}(G)$ with 
$\|f\|_{L^{2}}\leq \|f\|_{L^{\infty}}$ since the measure is normalised.

Now suppose that 
for $\phi\in C^{\infty}(G)$ we have \eqref{EQ:gl2}.
In view of \eqref{EQ:HY}, and using Lemma \ref{L:series} with 
an integer $k>n/2$, we obtain\footnote{Note that this can be adopted to give a simple proof 
of the Sobolev embedding theorem.}
\begin{eqnarray} 
\|\phi\|_{L^{\infty}}&\leq&
\sumxi d_{\xi}^{3/2}\|\widehat{\phi}(\xi)\|_{\HS}\nonumber\\
&\leq& 
\left(\sumxi d_{\xi} \|\widehat{\phi}(\xi)\|^{2}_{\HS}\jp{\xi}^{2k}\right)^{1/2}
\left(\sumxi d_{\xi}^{2}\jp{\xi}^{-2k}\right)^{1/2}\nonumber\\
&\le & C\|(I-\L_G)^{k/2}\phi\|_{L^2}\nonumber\\
&\leq& C_k\sum_{|\beta|\leq k}\|\partial^{\beta}\phi\|_{L^2},\nonumber 
\end{eqnarray}
with constant $C_{k}$ depending only on $G$.
Consequently we also have
\begin{equation}\label{EQ:aux1}
\|\partial^{\alpha}\phi\|_{L^\infty} \leq C_{k}\sum_{|\beta|\leq k}\|{\partial^{\alpha+\beta}\phi}\|_{L^{2}}.
\end{equation}
Using the inequalities
\begin{equation}\label{ineq}
\alpha !\leq |\alpha|!,\quad |\alpha|!\leq n^{|\alpha|}\alpha!
\quad\textrm{ and } \quad(|\alpha|+k)!\leq 2^{|\alpha|+k}k !|\alpha|!,
\end{equation}
in view of \eqref{EQ:aux1} and \eqref{EQ:gl2} 
we get
\begin{eqnarray}
||\partial^{\alpha}\phi||_{L^\infty}&\leq& C_{k} 
A^{|\alpha|+k}\sum_{|\beta|\leq k}\left((\alpha +\beta)!\right)^{s}\nonumber\\
&\leq& C_{k} A^{|\alpha|+k}\sum_{|\beta|\leq k}\left((|\alpha| + k)!\right)^{s}\nonumber\\
&\leq& C_k' A^{|\alpha|+k}  ( 2^{|\alpha|+k}k!)^s(|\alpha|!)^s \nonumber\\
&\leq& C_{k}'' A_{1}^{|\alpha|}(n^{|\alpha|} \alpha!)^s\nonumber\\
&\leq& C_{k}'' A_{2}^{|\alpha|}(\alpha!)^s,\nonumber
 \end{eqnarray}
with constants $C_{k}''$ and $A_{2}$ independent of $\alpha$, implying that
$\phi\in\gamma_{s}(G)$ and completing the proof.
\end{proof}
 
The following proposition prepares the possibility to passing to the conditions formulated on
the Fourier transform side.
\begin{prop} \label{PROP:l}
We have $\phi\in{\gamma_{s}(G)}$ if and only if
there exist constants $A>0$ and $C>0$ such that 
\begin{equation}\label{EQ:clap}
||\left(-\L_G\right)^{k}\phi||_{L^\infty}\leq C A^{2k}\left((2k)!\right)^s
\end{equation} 
holds for all $k\in\N_{0}$.  Also, $\phi\in\gamma_{(s)}(G)$ if and only if for every $A>0$ there
exists $C_{A}>0$ such that for all $k\in\N_{0}$ we have
$$
||\left(-\L_G\right)^{k}\phi||_{L^\infty}\leq C_A A^{2k}\left((2k)!\right)^s.
$$
\end{prop}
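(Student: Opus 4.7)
The plan is to pass through $L^{2}$ estimates using Lemma~\ref{L:gl2} as a bridge between \eqref{EQ:gl2} and \eqref{EQ:clap}.

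For the forward implication, fix a basis $X_{1},\ldots,X_{n}$ of the Lie algebra orthonormal with respect to the Killing form, so that $\L_{G}=\sum_{j=1}^{n}X_{j}^{2}$. Expanding gives
$$
 (-\L_{G})^{k}=(-1)^{k}\sum_{j_{1},\ldots,j_{k}=1}^{n}X_{j_{1}}^{2}\cdots X_{j_{k}}^{2},
$$
a sum of $n^{k}$ terms, each a $\partial^{\alpha}$ with $|\alpha|=2k$ in the convention of Section~\ref{SEC:Res}. The triangle inequality together with $\|\partial^{\alpha}\phi\|_{L^{\infty}}\leq CA^{|\alpha|}(\alpha!)^{s}\leq CA^{2k}((2k)!)^{s}$ immediately yield \eqref{EQ:clap} with $A$ replaced by $\sqrt{n}\,A$. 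The same expansion, with the quantifiers adjusted, handles the Beurling case.

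For the reverse implication, by Lemma~\ref{L:gl2} it suffices to estimate $\|\partial^{\alpha}\phi\|_{L^{2}}$. The key tool is the identity
$$
 \sum_{j_{1},\ldots,j_{m}=1}^{n}\|X_{j_{1}}\cdots X_{j_{m}}\phi\|_{L^{2}}^{2}=\|(-\L_{G})^{m/2}\phi\|_{L^{2}}^{2},
$$
valid for every $m\in\N$ and $\phi\in C^{\infty}(G)$, which follows by induction on $m$ from the base case $\sum_{j}\|X_{j}\phi\|_{L^{2}}^{2}=\langle -\L_{G}\phi,\phi\rangle$, using that the $X_{j}$ are skew-adjoint on $L^{2}(G)$ and that $[\L_{G},X_{j}]=0$ by bi-invariance of the Casimir. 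Consequently
$$
 \|\partial^{\alpha}\phi\|_{L^{2}}\leq\|(-\L_{G})^{|\alpha|/2}\phi\|_{L^{2}}
$$
for any multi-index $\alpha$. For even $|\alpha|=2k$ the right-hand side is bounded by $\|(-\L_{G})^{k}\phi\|_{L^{\infty}}\leq CA^{2k}((2k)!)^{s}$, using $\|\cdot\|_{L^{2}}\leq\|\cdot\|_{L^{\infty}}$ on the normalised Haar measure. For odd $|\alpha|=2k+1$, Cauchy--Schwarz yields
$$
 \|(-\L_{G})^{(2k+1)/2}\phi\|_{L^{2}}^{2}=\langle(-\L_{G})^{k}\phi,(-\L_{G})^{k+1}\phi\rangle\leq \|(-\L_{G})^{k}\phi\|_{L^{2}}\|(-\L_{G})^{k+1}\phi\|_{L^{2}},
$$
and the elementary bound $(2k)!\,(2k+2)!\leq 2\,((2k+1)!)^{2}$ absorbs the factorial mismatch. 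The final substitution $|\alpha|!\leq n^{|\alpha|}\alpha!$ from \eqref{ineq} delivers the bound required by Lemma~\ref{L:gl2}, hence $\phi\in\gamma_{s}(G)$.

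The Beurling statement follows by replacing every ``there exist $A,C$'' with ``for every $A$ there exists $C_{A}$'' throughout; each step of the argument respects this quantifier structure. The main obstacle is establishing the $L^{2}$-identity and handling the odd orders via Cauchy--Schwarz: both rely on the combination of skew-adjointness of the $X_{j}$, bi-invariance of $\L_{G}$, and the self-adjoint functional calculus for $-\L_{G}$. Once these are in place, the rest of the argument is routine bookkeeping with factorials.
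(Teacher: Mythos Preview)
Your forward implication is essentially the paper's argument: expand $(-\L_{G})^{k}$ as a sum of $n^{k}$ words in the $X_{j}$ and apply the Gevrey bound term by term.

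Your reverse implication, however, is correct and genuinely different from the paper's. The paper proceeds via the global symbolic calculus: it introduces the matrix symbol $\sigma_{\partial^{\alpha}}(\xi)$, proves the operator-norm bound $\|\sigma_{\partial^{\alpha}}(\xi)\|_{op}\leq C_{0}^{|\alpha|}\jp{\xi}^{|\alpha|}$, builds an auxiliary operator $P_{\alpha}$ with symbol $|\xi|^{-2k}\sigma_{\partial^{\alpha}}(\xi)$, and factors $\partial^{\alpha}=P_{\alpha}\circ(-\L_{G})^{k}$; the $L^{2}$-boundedness of $P_{\alpha}$ then follows from the series convergence of Lemma~\ref{L:series}. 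This is natural within the paper's Fourier-analytic framework but brings in the pseudo-differential machinery of \cite{RTb,RTi}. Your route bypasses all of this: the identity
\[
\sum_{j_{1},\ldots,j_{m}}\|X_{j_{1}}\cdots X_{j_{m}}\phi\|_{L^{2}}^{2}
=\langle(-\L_{G})^{m}\phi,\phi\rangle=\|(-\L_{G})^{m/2}\phi\|_{L^{2}}^{2}
\]
(which one can also verify directly by integrating by parts $m$ times and repeatedly commuting integer powers of $\L_{G}$ past the $X_{j}$, so that only the centrality of the Casimir and skew-adjointness of the $X_{j}$ are needed) immediately dominates each individual $\|\partial^{\alpha}\phi\|_{L^{2}}$, and the Cauchy--Schwarz interpolation for odd $|\alpha|$ is clean. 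The result is a shorter and more elementary proof that stays entirely on the group side; the paper's approach, by contrast, showcases the symbol calculus that it will reuse later and keeps the argument uniform with the Fourier characterisations that follow.
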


\begin{proof} 
We prove the Gevrey-Roumieu case \eqref{EQ:clap} and indicate small
additions to the argument for $\gamma_{(s)}(G)$.
Thus, let $\phi\in{\gamma_{s}(G)}$. Recall that by the definition there exist
some $A>0,$ $C>0$ such that for all multi-indices $\alpha$ we have
$$
||\partial^{\alpha}\phi||_{L^\infty}=
\sup_{x\in G}{|\partial^{\alpha}\phi(x)|\leq C
A^{|\alpha|}\left(\alpha !\right)^{s}}.$$ 
We will use the fact that for the compact Lie group $G$ the Laplace-Beltrami
operator $\L_{G}$ is given by 
$\L_G=X_1^2+X_2^2+...+X_n^2$, where $X_i$, $i=1,2,\ldots,n$, is a set of left-invariant 
vector fields corresponding to a normalised basis of the Lie algebra of $G$.
Then by the multinomial theorem\footnote{The form in which we use it is 
adapted to non-commutativity of vector fields. Namely, although the coefficients are
all equal to one in the non-commutative form, the multinomial coefficient appears once
we make a choice for $\alpha=(\alpha_{1},\cdots,\alpha_{n}).$}  
and using \eqref{ineq}, with
$Y_{j}\in\{X_{1},\ldots,X_{n}\}$, $1\leq j\leq |\alpha|$,
we can estimate
\begin{eqnarray}\label{EQ:estLG}
|(-{\mathcal{L}}_G)^{k}\phi(x)|&\leq& C\sum_{|\alpha|= k} \frac{k!}{\alpha!}\left| Y_1^{2}\ldots
Y_{|\alpha|}^{2}\phi(x)\right|\nonumber\\
&\leq& C \sum_{|\alpha|= k}\frac{k!}{\alpha!}[(2|\alpha|)!]^{s}A^{2|\alpha|}\nonumber\\
&\leq& C A^{2k}[(2k)!]^{s}
\sum_{|\alpha|= k}\frac{k! n^{|\alpha|}}{|\alpha|!} \nonumber\\
&\leq& C_{1} A^{2k}[(2k)!]^{s} n^{k} k^{n-1} \nonumber\\
&\leq& C_2 A_{1}^{2k}[(2k)!]^{s},
\end{eqnarray}
with $A_{1}=2nA$,
implying \eqref{EQ:clap}. For the Gevrey-Beurling case $\gamma_{(s)}(G)$, we observe that
we can obtain any $A_{1}>0$ in \eqref{EQ:estLG}
by using $A=\frac{A_{1}}{2n}$ in the Gevrey estimates for $\phi\in\gamma_{(s)}(G).$

Conversely, suppose $\phi\in C^{\infty}(G)$ is such that the inequalities \eqref{EQ:clap} hold. 
First we note that for $|\alpha|=0$ the estimate \eqref{EQ:GR} follows from
\eqref{EQ:clap} with $k=0$, so that we can assume $|\alpha|>0.$

Following \cite{RTi}, we define the symbol of $\paal$ to be
$\sigma_{\paal}(\xi)=\xi(x)^{*}\paal\xi(x),$ and we have 
$\sigma_{\paal}(\xi)\in\C^{d_{\xi}\times d_{\xi}}$ is
independent of $x$ since $\paal$ is left-invariant. For the in-depth analysis of symbols
and symbolic calculus for general operators on $G$ we refer to \cite{RTb, RTi} but we
will use only basic things here. In particular, we have 
$$
\paal\phi(x)=\sumxi d_{\xi} \Tr\p{\xi(x)\sigma_{\paal}(\xi)\widehat\phi(\xi)}.
$$
First we calculate the operator norm $||\sigma_\paal(\xi)||_{op}$ of the matrix multiplication
by $\sigma_\paal(\xi)$.
Since $\partial^{\alpha}=Y_{1}\cdots Y_{|\alpha|}$ and
$Y_{j}\in\{X_{1},\ldots,X_{n}\}$ are all left-invariant,
we have $\sigma_{\paal}=\sigma_{Y_1}\cdots \sigma_{Y_{|\alpha|}}$, so that
we get 
$$
\|\sigma_{\paal}(\xi)\|_{op}\leq
\|\sigma_{X_1}(\xi)\|^{\alpha_1}_{op}\cdots \|\sigma_{X_n}(\xi)\|^{\alpha_n}_{op}.
$$
Now, since $X_{j}$ are operators of the first order, one can show
(see e.g. \cite[Lemma 8.6]{RTi},  or \cite[Section 10.9.1]{RTb} for general arguments) that 
$||\sigma_{X_j}(\xi)||_{op}\leq C_{j}\jp{\xi}$ for some constants $C_{j},$ 
$j=1,\ldots,n$. 
Let $C_0=\sup_{j}C_j+1,$ then we have 
\begin{equation}\label{EQ:paalnorm}
\|\sigma_\paal(\xi)\|_{op}\leq C_{0}^{|\alpha|}\jp{\xi}^{|\alpha|}.
\end{equation}
Let us define $\sigma_{P_{\alpha}}\in\Sigma$ by
setting $\sigma_{P_{\alpha}}(\xi):=|\xi|^{-2k}\sigma_{\paal}(\xi)$ for 
$[\xi]\in\Ghs$, and by $\sigma_{P_{\alpha}}(\xi):=0$ for 
$[\xi]\in\Gh\backslash\Ghs.$ 
This gives the corresponding operator
\begin{eqnarray} \label{EQ:Pal}
(P_{\alpha}\phi)(x) & = & \sumxi d_{\xi} \Tr\p{\xi(x)\sigma_{P_{\alpha}}(\xi)\widehat\phi(\xi)} .
\end{eqnarray}
From \eqref{EQ:paalnorm} we obtain
\begin{equation}\label{EQ:paalnorm2}
\|\sigma_{P_{\alpha}}(\xi)\|_{op}\leq C_{0}^{|\alpha|} \jp{\xi}^{|\alpha|} |\xi|^{-2k} 
\textrm{ for all } [\xi]\in\Ghs.
\end{equation}
Now, for $[\xi]\in \Ghs$, from \eqref{EQ:ll} we have
$$
|\xi|^{-2k} \leq C_{1}^{2k}\jp{\xi}^{-2k},\quad
C_{1}=\p{1+\frac{1}{\lambda_{1}^{2}}}^{1/2}.
$$
Together with \eqref{EQ:paalnorm2}, and the trivial estimate for $[\xi]\in\Gh\backslash\Ghs$,
we obtain
\begin{equation}\label{EQ:paalnorm3}
\|\sigma_{P_{\alpha}}(\xi)\|_{op}\leq C_{0}^{|\alpha|} C_{1}^{2k} \jp{\xi}^{|\alpha|-2k} 
\textrm{ for all } [\xi]\in\Gh.
\end{equation}
Using \eqref{EQ:Pal} and the Plancherel identity, we estimate
\begin{eqnarray*} 
|P_{\alpha}\phi(x)| &\leq& \sumxi d_{\xi} \|\xi(x)\sigma_{P_{\alpha}}(\xi)\|_{\HS}
\|\widehat{\phi}(\xi)\|_{\HS}\nonumber\\
&\leq& \left(\sumxi d_{\xi}\|\widehat{\phi}(\xi)\|^{2}_{\HS}\right)^{1/2}
\left(\sumxi d_{\xi} 
\|\sigma_{P_{\alpha}}(\xi)\|_{op}^{2}
\|\xi(x)\|_{\HS}^{2}
\right)^{1/2}\nonumber\\
&=& \|\phi\|_{L^{2}}
\left(\sumxi d_{\xi}^{2} \|\sigma_{P_{\alpha}}(\xi)\|_{op}^{2}\right)^{1/2}.\nonumber
\end{eqnarray*} 
From this and \eqref{EQ:paalnorm3} we conclude that
\begin{eqnarray*} 
|P_{\alpha}\phi(x)| 
\leq \|\phi\|_{L^{2}} 
C_{0}^{|\alpha|} C_{1}^{2k}
\left(\sumxi d_{\xi}^{2}\jp{\xi}^{-2(2k-|\alpha|)}\right)^{1/2}.
\end{eqnarray*} 
Now, in view of Lemma \ref{L:series} the series on the right hand side converges
provided that $2k-|\alpha|>n/2.$
Therefore, for $2k-|\alpha|>n/2$ we obtain
\begin{equation}\label{EQ:pal2}
\|P_{\alpha}\phi\|_{L^2}\leq C C_{2}^{2k} \|\phi\|_{L^{2}},
\end{equation}
with some $C$ and $C_{2}=C_{0}C_{1}$ independent of $k$ and $\alpha$.
We note that here we used that $|\alpha|\leq 2k$ and that we can
always have $C_{0}\geq 1$.

We now observe that from the definition of $\sigma_{P_{\alpha}}$ we have 
\begin{equation}\label{EQ:sp}
\sigma_{\paal}(\xi)=\sigma_{P_{\alpha}}(\xi) |\xi|^{2k}
\end{equation}
for all $[\xi]\in\Ghs$.
On the other hand, since we assumed $|\alpha|\not=0$, for
$[\xi]\in\Gh\backslash\Ghs$ we have
$\sigma_{\paal}(\xi)=\xi(x)^{*}\paal\xi(x)=0$, so that
\eqref{EQ:sp} holds true for all $[\xi]\in\Gh.$
This implies that in the operator sense, we have
$\paal=P_{\alpha}\circ (-\L_{G})^{k}.$
Therefore, from this relation and \eqref{EQ:pal2}, for $|\alpha|<2k-n/2$, we get
\begin{eqnarray*}
\|\partial^{\alpha}\phi\|_{L^2}^{2}&=& \|P_{\alpha}\circ(-\L_{G})^{k}\phi\|_{L^{2}}^{2}\nonumber\\
&\leq& C C_{2}^{4k} \int_{G} |(-\L_{G})^k \phi(x)|^2 dx\nonumber\\
&\leq& C' C_2^{4k} A^{4k} ((2k)!)^{2s}\nonumber\\
&\leq& C' A_1^{4k} ((2k)!)^{2s},
\end{eqnarray*}
where we have used the assumption \eqref{EQ:clap}, and with $C'$ and 
$A_{1}=C_{2}A$ independent
of $k$ and $\alpha$.
Hence we have $\|\partial^{\alpha}\phi\|_{L^2}\leq C A_1^{2k}((2k)!)^s$ for all $|\alpha|< 2k-n/2.$
Then, for every $\beta$, by the above argument, taking an integer $k$ such that
$|\beta|+4n\geq 2k>|\beta|+n/2$,  if $A_{1}\geq 1$, we obtain 
\begin{eqnarray*}
\|\partial^{\beta}\phi\|_{L^2}\leq C A_1^{|\beta|+4n}\left((|\beta|+4n)!\right)^{s}
\leq C' A_1^{|\beta|} \left(2^{|\beta|+4n}(4n)!|\beta|!\right)^{s}
\leq C''A_{2}^{|\beta|} (\beta!)^{s},
\end{eqnarray*} 
in view of inequalities \eqref{ineq}.
By Lemma  \ref{L:gl2} it follows that $\phi\in\gamma_{s}(G).$

If $A_{1}<1$ (in the case of  $\gamma_{(s)}(G)$), we estimate
$$
\|\partial^{\beta}\phi\|_{L^2}\leq C A_1^{|\beta|+n/2}\left((|\beta|+4n)!\right)^{s}
\leq C''A_{3}^{|\beta|} (\beta!)^{s}
$$
by a similar argument.
The relation between constants, namely $A_{1}=C_{2}A$ and
$A_{3}=2nA_{1}$, implies that the case of
$\gamma_{(s)}(G)$ also holds true.
\end{proof}

We can now pass to the Fourier transform side.

\begin{lemma} \label{L:FT}
For $\phi\in \gamma_{s}(G)$, there exist constants $C>0$ and $A>0$ such that
\begin{equation}\label{EQ:FT1}
||\widehat{\phi}(\xi)||_{\HS}\leq C d_{\xi}^{1/2} |\xi|^{-2m} A^{2m}\left((2m)!\right)^{s}
\end{equation}
holds for all $m\in\N_{0}$ and $[\xi]\in\Ghs.$ 
Also,
for $\phi\in \gamma_{(s)}(G)$, for every $A>0$ there exists $C_{A}>0$ such that
$$
||\widehat{\phi}(\xi)||_{\HS}\leq C_{A} d_{\xi}^{1/2} |\xi|^{-2m} A^{2m}\left((2m)!\right)^{s}$$ 
holds for all $m\in\N_{0}$ and $[\xi]\in\Ghs.$
\end{lemma}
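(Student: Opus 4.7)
The plan is to pass from a Gevrey bound on $(-\L_G)^m\phi$ to a Fourier-side bound via the Hausdorff--Young inequality, using the fact that $-\L_G$ acts diagonally on each matrix coefficient.

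First I would record the spectral identity
$$
\widehat{(-\L_G)^m\phi}(\xi)=|\xi|^{2m}\,\widehat{\phi}(\xi),\qquad m\in\N_0,\ [\xi]\in\Gh,
$$
which follows immediately from $-\L_G\xi_{ij}=|\xi|^2\xi_{ij}$ and the self-adjointness of $\L_G$ (equivalently, integrating by parts against $\xi(x)^*$ in the definition of the Fourier coefficient).

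Next I would apply the Hausdorff--Young estimate \eqref{EQ:HY} to $(-\L_G)^m\phi$, combined with $\|g\|_{L^1(G)}\leq\|g\|_{L^\infty(G)}$ (the Haar measure is normalised) and the definition \eqref{EQ:linf} of the $\ell^\infty(\Gh)$ norm, to obtain for every $[\xi]\in\Gh$
$$
d_\xi^{-1/2}\,\|\widehat{(-\L_G)^m\phi}(\xi)\|_{\HS}\leq \|(-\L_G)^m\phi\|_{L^\infty(G)}.
$$
Inserting the identity above and multiplying through by $d_\xi^{1/2}$ yields
$$
|\xi|^{2m}\,\|\widehat{\phi}(\xi)\|_{\HS}\leq d_\xi^{1/2}\,\|(-\L_G)^m\phi\|_{L^\infty(G)}.
$$

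Finally, for $\phi\in\gamma_s(G)$ Proposition \ref{PROP:l} supplies constants $A,C>0$ such that $\|(-\L_G)^m\phi\|_{L^\infty}\leq CA^{2m}((2m)!)^s$. Since $[\xi]\in\Ghs$ forces $|\xi|>0$, I can divide the previous display by $|\xi|^{2m}$ to produce exactly \eqref{EQ:FT1}. The Gevrey--Beurling case proceeds verbatim, with the quantifiers on $A$ swapped, using the $\gamma_{(s)}(G)$ half of Proposition \ref{PROP:l}. I do not anticipate any genuine obstacle here: all ingredients (Hausdorff--Young, diagonalisation of $\L_G$ on representation coefficients, and Proposition \ref{PROP:l}) are already available, and the only elementary point to verify is the spectral identity in the first step.
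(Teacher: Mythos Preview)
Your proposal is correct and follows essentially the same approach as the paper: both combine the spectral identity $\widehat{(-\L_G)^m\phi}(\xi)=|\xi|^{2m}\widehat\phi(\xi)$, the Hausdorff--Young bound \eqref{EQ:HY} (together with $\|\cdot\|_{L^1}\leq\|\cdot\|_{L^\infty}$), the definition \eqref{EQ:linf} of the $\ell^\infty(\Gh)$ norm, and Proposition~\ref{PROP:l}. The paper's write-up is slightly more compressed, but the ingredients and their order are the same.
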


\begin{proof} 
We will treat the case $\gamma_{s}$ since $\gamma_{(s)}$ is analogous.
Using the fact that the Fourier transform is a bounded linear operator 
from $L^{1}(G)$ to $l^{\infty}(\widehat{G})$, see \eqref{EQ:HY},
and using Proposition \ref{PROP:l}, we obtain
\begin{eqnarray*}
|||\xi|^{2m}\widehat{\phi}(\xi)||_{l^{\infty}(\Gh)}
&\leq&\int_{G}|\left(-{\mathcal{L}}_G\right)^m\phi(x)|dx~\nonumber\\~
&\leq&  C A^{2m}\left((2m)!\right)^s\end{eqnarray*}
for all $[\xi]\in \Gh$ and $m\in\N_{0}.$
Recalling the definition of $\ell^{\infty}(\Gh)$ in \eqref{EQ:linf} we obtain
\eqref{EQ:FT1}.
\end{proof} 

We can now prove Theorem \ref{THM:Gevrey}.

\begin{proof}[Proof of Theorem \ref{THM:Gevrey}]
\textbf{(R)}  ``Only if'' part. \\
Let $\phi\in \gamma_{s}(G).$ 
Using $k!\leq k^k$ and Lemma \ref{L:FT} we get
\begin{equation}\label{EQ:ft1}
||\widehat{\phi}(\xi)||_{\HS}\leq C d_{\xi}^{1/2} \inf_{2m\geq 0}|\xi|^{-2m} A^{2m}\left(2m\right)^{2ms}\end{equation}
for all $[\xi]\in\Ghs.$
We will show that this implies the (sub-)exponential decay in \eqref{EQ:GR}.
It is known that for $r>0,$ we have the identity
\begin{equation}\label{EQ:x0}
\inf_{x> 0}x^{sx}r^{-x}=e^{-(s/e)r^{1/s}}.
\end{equation}
So for a given $r>0$ there exists some $x_0=x_{0}(r)>0$ such that  
\begin{equation}\label{EQ:x00}
\inf_{x>0}x^{sx}\left(\frac{r}{8^s}\right)^{-x}=x_{0}^{sx_0}\left(\frac{r}{8^s}\right)^{-x_0}.
\end{equation}
We will be interested in large $r$, in fact we will later set 
$r=\frac{|\xi|}{A}$, so we can assume that $r$ is large. Consequently, in
\eqref{EQ:x0} and later, we can assume that $x_{0}$ is sufficiently large.
Thus, we can take an even (sufficiently large) integer $m_{0}$ such that
$m_0\leq x_0<m_0+2$. Using the trivial inequalities
$$
\left(m_0\right)^{sm_0}r^{-(m_0+2)}\leq x_0^{sx_0}r^{-x_0},\quad r\geq 1,
$$ 
and 
$$
\left(k+2\right)^{k+2}\leq 8^k k^k
$$ 
for any $k\geq 2,$  we obtain 
$$
\left(m_0+2\right)^{s(m_0+2)}r^{-(m_0+2)}\leq 8^{sm_0}m_0^{sm_0}r^{-(m_0+2)}
\leq x_{0}^{sx_0}\left(\frac{r}{8^s}\right)^{-x_0}.
$$  
It follows from this, \eqref{EQ:x0} and \eqref{EQ:x00}, that 
\begin{equation}\label{EQ:ft3}
\inf_{2m\geq 0}{(2m)^{2sm}r^{-2m}}\leq x_0^{sx_0}\left(\frac{r}{8^s}\right)^{-x_0}
=e^{-(s/e)(\frac{r}{8^s})^{1/s}}.
\end{equation}
Let now $r=\frac{|\xi|}{A}$. From \eqref{EQ:ft1} and \eqref{EQ:ft3}
we obtain
\begin{eqnarray}
\|\widehat{\phi}(\xi)\|_{\HS}&\leq&C d_{\xi}^{1/2}
\inf_{2m\geq 0}\frac{A^{2m}}{|\xi|^{2m}}\left(2m\right)^{2ms}~\nonumber\\~
&=&C d_{\xi}^{1/2} \inf_{2m\geq 0}r^{-2m}\left(2m\right)^{2ms}~\nonumber\\~
&\leq&C d_{\xi}^{1/2} e^{-(s/e)\left(\frac{r}{8^s}\right)^{1/s}}\nonumber\\~
&=&C d_{\xi}^{1/2} e^{-(s/e)\frac{|\xi|^{1/s}}{8A^{1/s}}}~\nonumber\\~
&\leq&C d_{\xi}^{1/2} e^{-2B|\xi|^{1/s}},
\label{aux5}
\end{eqnarray}
with $2B=\frac{s}{8e}\frac{1}{A^{1/s}}.$ From \eqref{EQ:exp} it follows that
$d_{\xi}^{1/2} e^{-B|\xi|^{1/s}}\leq C.$ 
Using \eqref{EQ:ll}, we obtain \eqref{EQ:GR} for all $[\xi]\in\Ghs.$
On the other hand, for trivial $[\xi]\in\Gh\backslash\Ghs$ the estimate
\eqref{EQ:GR} is just the condition of the boundedness. 
This completes the proof of the ``only if'' part.

Now we prove the ``if'' part. 
Suppose $\phi\in C^{\infty}(G)$ is such that \eqref{EQ:GR} holds, i.e. we have
$$||\widehat{\phi}(\xi)||_{\HS}\leq K e^{-B\jp{\xi}^{1/s}}.$$
The $\ell^{1}(\Gh)-L^{\infty}(G)$ boundedness of the inverse Fourier transform in \eqref{EQ:HY} 
implies
\begin{eqnarray}
\|(-\L_G)^{k}\phi\|_{L^{\infty}(G)}&\leq& \| |\xi|^{2k}\widehat\phi\|_{\ell^{1}(\Gh)}\nonumber\\~
&=&\sumxi d_{\xi}^{3/2} |\xi|^{2k} ||\widehat{\phi}(\xi)||_{\HS}\nonumber\\~
&\leq&K\sumxi d_{\xi}^{3/2}\jp{\xi}^{2k}e^{-B\jp{\xi}^{1/s}}\nonumber\\~
&\leq&K\sumxi d_{\xi}^{3/2} e^{\frac{-B\jp{\xi}^{1/s}}{2}} \p{\jp{\xi}^{2k} e^{\frac{-B\jp{\xi}^{1/s}}{2}}}.
\label{aux2}
\end{eqnarray} 
Now we will use the following simple inequality, $\frac{t^N}{N!}\leq e^{t}$ for $t>0$. 
Setting later $m=2k$ and $a=\frac{B}{2}$, we estimate
$$
(m!)^{-s}\jp{\xi}^{m}=\p{\frac{(a\jp{\xi}^{1/s})^{m}}{m!}}^{s} a^{-sm} \leq
a^{-sm} e^{a\jp{\xi}^{1/s}},
$$
which implies 
$e^{-\frac{B}{2}\jp{\xi}^{1/s}}\jp{\xi}^{2k}\leq A^{2k}((2k)!)^{s},$
with $A=a^{-s}=(2/B)^{s}.$ 
Using this inequality and \eqref{aux2} we obtain
\begin{equation}\label{aux3}
\|(-\L_G)^{k}\phi\|_{L^{\infty}}\leq K\sumxi d_{\xi}^{3/2}
e^{\frac{-B\jp{\xi}^{1/s}}{2}}A^{2k}((2k)!)^s
\leq C A^{2k}((2k)!)^s
\end{equation} 
with $A= \frac{2^s}{B^s}$, where the convergence of the series in $[\xi]$ follows from
Lemma \ref{L:series}.
Therefore, $\phi\in \gamma_{s}(G)$ by Proposition \ref{PROP:l}.
\medskip

\textbf{(B)}  ``Only if'' part.  
Suppose  $\phi\in \gamma_{(s)}(G).$ 
For any given $B>0$ define $A$ by solving $2B=\left(\frac{s}{8e}\right)\frac{1}{A^{1/s}}.$ 
By Lemma \ref{L:FT} there exists $K_{B}>0$ such that 
\begin{equation*}\label{EQ:ft2}
||\widehat{\phi}(\xi)||_{\HS}\leq K_{B} d_{\xi}^{1/2} \inf_{2m\geq 0} |\xi|^{-2m} A^{2m}\left(2m\right)^{2ms}.\end{equation*}
Consequently, arguing as in case \textbf{(R)} we get \eqref{aux5}, i.e. 
\begin{equation*}\label{aux4}
\|\widehat{\phi}(\xi)\|_{\HS}\leq 
K_B d_{\xi}^{1/2}e^{-2B |\xi|^{1/s}} 
\end{equation*} 
for all $[\xi]\in \Gh.$ The same argument as in the case \textbf{(R)} now completes the proof.

``If'' part. 
For a given $A>0$ define $B>0$ by solving  $A=\frac{2^s}{B^s}$ and take $C_A$ big enough as 
in the case of \textbf{(R)}, so that we get
$$
\|(-\L_G)^{k}\phi\|_{L^{\infty}}\leq C_A A^{2k}((2k)!)^s.
$$ 
Therefore, $\phi\in \gamma_{(s)}(G)$ by Proposition \ref{PROP:l}.
\end{proof}

\section{$\alpha$-duals $\gamma_s(G)^{\wedge}$ and $\gamma_{(s)}(G)^{\wedge}$, 
for any $s$, $0<s<\infty$.}
\label{SEC:alpha}

First we analyse $\alpha$-duals of Gevrey spaces regarded as sequence spaces through
their Fourier coefficients.

We can embed $\gamma_{s}(G)\left(\textrm{or}~\gamma_{(s)}(G)\right)$ 
in the sequence space $\Sigma$ using the Fourier coefficients and
Theorem \ref{THM:Gevrey}. 
We denote the $\alpha$-dual of such the sequence space 
$\gamma_{s}(G)$ (or $\gamma_{(s)}(G)$) as 
\begin{multline*}
[\gamma_{s}(G)]^{\wedge}
=\left\{v=(v_{\xi})_{[\xi]\in\Gh}\in\Sigma:
\sumxi\sumij  |(v_{\xi})_{ij}| |\widehat{\phi}(\xi)_{ij}|<\infty
\textrm{ for all } \phi\in\gamma_{s}(G)
\right\},
\end{multline*}
with a similar definition for $\gamma_{(s)}(G).$

\begin{lemma} \label{L:ser}
{\rm\bf (R)} We have 
$v \in \left[ {\gamma_s \left( G \right)} \right]^{\wedge}$ 
if and only if for every $B>0$ the inequality
\begin{equation}\label{EQ:ad}
\sumxi e^{ - B\jp{\xi}^{\frac{1} {s}} } \left\| {v_\xi}
\right\|_{\HS}   < \infty
\end{equation}
holds for all $[\xi]\in\Gh.$ \\
{\rm\bf (B)} Also, we have 
$v \in \left[ {\gamma_{(s)} \left( G \right)} \right]^{\wedge}$ 
if and only if there exists $B>0$ such that the inequality
\eqref{EQ:ad}
holds for all $[\xi]\in\Gh.$
\end{lemma}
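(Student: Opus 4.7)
I would split the argument along the four ``if/only if'' directions, noting that Cauchy--Schwarz in the $(i,j)$ indices gives
$$\sumij |(v_\xi)_{ij}|\,|\widehat\phi(\xi)_{ij}|\leq \|v_\xi\|_{\HS}\,\|\widehat\phi(\xi)\|_{\HS},$$
so that the whole problem reduces to pairings of Hilbert--Schmidt norms weighted by $e^{\pm B\jp{\xi}^{1/s}}$, which is exactly the setting where Theorem \ref{THM:Gevrey} applies.

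The ``if'' directions are then immediate. In case \textbf{(R)}, for any $\phi\in\gamma_s(G)$ Theorem \ref{THM:Gevrey}(R) supplies some $B_0,K>0$ with $\|\widehat\phi(\xi)\|_{\HS}\leq K e^{-B_0\jp{\xi}^{1/s}}$, and hypothesis \eqref{EQ:ad} applied at $B=B_0$ closes the estimate. In case \textbf{(B)} the roles reverse: the hypothesis fixes one $B>0$ for which $\sumxi e^{-B\jp{\xi}^{1/s}}\|v_\xi\|_{\HS}<\infty$, and the Beurling characterization in Theorem \ref{THM:Gevrey}(B) provides $K_B$ at that same $B$.

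For the ``only if'' direction of \textbf{(R)}, given $B>0$ I would construct an explicit test function $\phi_B\in\gamma_s(G)$ whose Fourier coefficients are aligned with the entries of $\overline{v_\xi}$:
$$\widehat{\phi_B}(\xi)_{ij} := \begin{cases} e^{-B\jp{\xi}^{1/s}}\,\overline{(v_\xi)_{ij}}/\|v_\xi\|_{\HS}, & v_\xi\neq 0,\\ 0, & v_\xi=0.\end{cases}$$
Then $\|\widehat{\phi_B}(\xi)\|_{\HS}\leq e^{-B\jp{\xi}^{1/s}}$, so by Theorem \ref{THM:Gevrey}(R) the associated Fourier series (convergent on account of \eqref{EQ:exp}) defines a genuine element $\phi_B\in\gamma_s(G)$. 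Inserting it into the $\alpha$-dual condition $v\in[\gamma_s(G)]^{\wedge}$ gives exactly $\sumxi e^{-B\jp{\xi}^{1/s}}\|v_\xi\|_{\HS}<\infty$, which is \eqref{EQ:ad}.

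The main obstacle is the ``only if'' direction of \textbf{(B)}: a single explicit test function no longer suffices since every $\phi\in\gamma_{(s)}(G)$ must have faster-than-$e^{-B\jp{\xi}^{1/s}}$ decay for \emph{every} $B>0$. I would argue by contradiction via a diagonal construction. Assume $v\in[\gamma_{(s)}(G)]^\wedge$ but $\sumxi e^{-B\jp{\xi}^{1/s}}\|v_\xi\|_{\HS}=\infty$ for every $B>0$, and pick any $B_k\uparrow\infty$. Inductively choose pairwise disjoint finite sets $F_k\subset\Gh$ with every $[\xi]\in F_k$ satisfying $\jp{\xi}\geq M_k$, where $M_k\to\infty$ is chosen rapidly, and
$$\sum_{[\xi]\in F_k} e^{-B_k\jp{\xi}^{1/s}}\|v_\xi\|_{\HS}\geq 1;$$
such $F_k$ exist because removing a finite portion (the previous $F_j$'s and the ball $\jp{\xi}<M_k$) from a divergent series leaves a divergent tail. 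Define $\widehat\phi(\xi)_{ij}=e^{-B_k\jp{\xi}^{1/s}}\overline{(v_\xi)_{ij}}/\|v_\xi\|_{\HS}$ on $F_k$ and zero elsewhere. For each $B>0$ only finitely many $F_k$ have $B_k<B$, and on each such $F_k$ the ratio $e^{(B-B_k)\jp{\xi}^{1/s}}$ is bounded because $F_k$ is finite, so one can extract a $K_B$ verifying the Beurling bound and conclude $\phi\in\gamma_{(s)}(G)$ via Theorem \ref{THM:Gevrey}(B). On the other hand the pairing computes to
$$\sumxi\sumij|(v_\xi)_{ij}|\,|\widehat\phi(\xi)_{ij}|=\sum_k\sum_{[\xi]\in F_k}e^{-B_k\jp{\xi}^{1/s}}\|v_\xi\|_{\HS}\geq\sum_k 1=\infty,$$
contradicting $v\in[\gamma_{(s)}(G)]^\wedge$ and completing the argument.
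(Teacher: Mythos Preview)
Your proof is correct. The ``if'' directions and the Roumieu ``only if'' are essentially the same as the paper's, with one cosmetic difference: in the \textbf{(R)} ``only if'' step the paper uses the test function with \emph{constant} entries $\widehat\phi(\xi)_{ij}=d_\xi e^{-B\jp{\xi}^{1/s}}$ and then passes from $\|v_\xi\|_{\HS}$ to $\|v_\xi\|_{\ell^1(\C)}$ via Lemma~\ref{L:c}, whereas you align $\widehat{\phi_B}(\xi)$ with $\overline{v_\xi}/\|v_\xi\|_{\HS}$ so that the pairing returns $\|v_\xi\|_{\HS}$ directly. Both choices work.

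The genuine difference is in the Beurling ``only if'' direction. The paper does \emph{not} argue directly: it identifies $\gamma_{(s)}(G)$ (via Theorem~\ref{THM:Gevrey}) with the intersection $\bigcap_{B>0}\widehat{E_B}$ of duals of the echelon spaces $E_B=\{v:\sumxi\sumij e^{-B\jp{\xi}^{1/s}}|(v_\xi)_{ij}|<\infty\}\cong\ell^1$, and then invokes K\"othe's duality theory for echelon/co-echelon spaces to conclude $[\gamma_{(s)}(G)]^\wedge=\bigcup_{B>0}E_B$. Your diagonal contradiction argument is a valid elementary substitute: the construction of pairwise disjoint finite packets $F_k$ with $\sum_{F_k}e^{-B_k\jp{\xi}^{1/s}}\|v_\xi\|_{\HS}\geq 1$ is sound (tails of divergent series remain divergent), and your verification that the resulting $\phi$ satisfies the Beurling estimate for every $B$ is correct since only finitely many $B_k$ fall below a given $B$ and each corresponding $F_k$ is finite. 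What you gain is self-containment---no appeal to \cite{Koe}---at the cost of a slightly longer hands-on argument; what the paper gains is brevity and a structural explanation (the Beurling class is a projective limit, so its K\"othe dual is the corresponding inductive limit).
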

The proof of this lemma in (R) and (B) cases will be different. 
For (R) we can show this
directly, and for (B) we employ the theory of echelon spaces by K\"othe \cite{Koe}.

\begin{proof}
\textbf{(R)} ``Only if'' part. 
Let $v \in \left[ {\gamma_s \left( G \right)} \right]^{\wedge}$.
For any $B>0$, define $\phi$ by setting its Fourier coefficients to be
$\widehat{\phi}(\xi)_{ij}:=d_\xi e^{ -B\jp{\xi}^{\frac{1} {s}} },$
so that
$\|\widehat{\phi}(\xi)\|_{\HS}=d_\xi^2 e^{ -B\jp{\xi}^{\frac{1} {s}} }\leq
Ce^{ -\frac{B}{2}\jp{\xi}^{\frac{1} {s}} }$
by \eqref{EQ:exp}, which implies that
$\phi \in \gamma_s \left( G \right)$ by Theorem \ref{THM:Gevrey}.
Using Lemma \ref{L:c}, we obtain 
\[
\sumxi  e^{-B\jp{\xi}^{\frac{1}{s}}}  \|{v_\xi}\|_{\HS} \leq
\sumxi  d_{\xi} e^{-B\jp{\xi}^{\frac{1}{s}}}  \|{v_\xi}\|_{\ell^1(\C)} =
\sumxi \sumij  |(v_\xi)_{ij}| |\widehat{\phi}(\xi)_{ij}|  <
\infty
\]
by the assumption  $v \in \left[ {\gamma_s \left( G \right)} \right]^{\wedge}$,
proving the ``only if'' part. \\

``If'' part.
Let $ \phi \in \gamma_s(G)$. Then by Theorem \ref{THM:Gevrey}
there exist some  $B>0$ and $C>0$ such that 
$$
\|\widehat{\phi}(\xi)\|_{\HS} \leq C
e^{-B\jp{\xi}^{\frac{1}{s}}},
$$ 
which implies that
$$ 
\sumxi \sumij |(v_{\xi})_{ij}| |\widehat{\phi}(\xi)_{ij}| 
\leq \sumxi \|v_\xi\|_{\HS} \|\widehat{\phi}(\xi)\|_{\HS}
\leq C \sumxi e^{-B\jp{\xi}^{\frac{1}{s}}} \|v_\xi\|_{\HS} 
  < \infty
$$
is finite by the assumption \eqref{EQ:ad}.
But this means that $v \in [\gamma_s(G)]^{\wedge}.$ \\

\textbf{(B)}  
For any $B>0$ we consider the so-called echelon space,
$$
E_B=\left\{v=(v_\xi)\in\Sigma: \sumxi\sumij 
e^{- B\jp{\xi}^{\frac{1} {s}} } 
|({v_\xi})_{ij}|<\infty\right\}.
$$ 
Now, by diagonal transform we have $E_B \cong l^1$  and hence 
$\widehat{E_{B}}\cong l^{\infty}$, and it is easy to check 
that $\widehat{E_{B}}$ is given by 
$$
\widehat{E_{B}}=
\left\{w=(w_{\xi})\in\Sigma\;|\; \exists K>0\; : \quad
|(w_\xi)_{ij}|\leq K e^{-B\jp{\xi}^{1/s}}
\textrm{ for all } 1\leq i,j\leq d_\xi
\right\}.
$$ 
By Theorem \ref{THM:Gevrey} we know that
$\phi\in \gamma_{(s)}(G)$ if and only if 
$\left(\widehat{\phi}(\xi)\right)_{[\xi]\in \Gh}
\in \bigcap_{B>0}{\widehat{E_B}}.$ 
Using K\"othe's theory relating echelon and co-echelon spaces 
\cite[Ch. 30.8]{Koe}, we have, consequently, that
$v\in \gamma_{(s)}(G)^{\wedge}$ if and only if 
$(v_{\xi})_{[\xi]\in \Gh}\in \bigcup_{B>0}E_{B}$.
But this means that for some $B>0$ we have
$$
\sumxi\sumij 
e^{- B\jp{\xi}^{\frac{1} {s}} } 
|({v_\xi})_{ij}| < \infty.
$$
Finally, we observe that this is equivalent to
\eqref{EQ:ad} if we use
Lemma \ref{L:c} and \eqref{EQ:exp}.
\end{proof}

We now give the characterisation for $\alpha$-duals.

\begin{thm}\label{THM:aduals}
Let $0<s<\infty$. \\
{\rm\bf (R)} We have $v \in \left[ {\gamma_s \left( G
\right)} \right]^\wedge$ if and only if for every $B>0$ there exists $K_B>0$ such that
\begin{equation}\label{EQ:ade}
|| { v_ \xi  }||_{\HS}  \leq K_B e^{B\left\langle \xi  \right\rangle ^{\frac{1} {s}} }
\end{equation}
holds for all $ [\xi] \in \Gh$. \\
{\rm\bf (B)} We have $v \in \left[ {\gamma_{(s)} \left( G
\right)} \right]^\wedge$ 
if and only if there exist $B>0$ and $K_{B}>0$ such that
\eqref{EQ:ade}
holds for all $ [\xi] \in \Gh$.
\end{thm}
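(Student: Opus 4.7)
The plan is to derive Theorem \ref{THM:aduals} as an essentially elementary consequence of Lemma \ref{L:ser}, which already identifies membership in $[\gamma_s(G)]^\wedge$ (resp.\ $[\gamma_{(s)}(G)]^\wedge$) with the weighted $\ell^1$ condition
\begin{equation*}
\sumxi e^{-B\jp{\xi}^{1/s}}\|v_\xi\|_{\HS} < \infty
\end{equation*}
holding for every $B>0$ (resp.\ for some $B>0$). I therefore only need to verify that this summability is equivalent to the pointwise bound \eqref{EQ:ade}, with the ``for every $B$'' vs.\ ``exists $B$'' quantifier transferring correctly.

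First, for the direction ``summability $\Rightarrow$ pointwise bound'', each term of a convergent series of nonnegative numbers is bounded by its sum, so $\|v_\xi\|_{\HS}\leq K_B e^{B\jp{\xi}^{1/s}}$ holds with $K_B$ being the value of the sum; the Roumieu and Beurling quantifiers on $B$ transfer directly. For the reverse direction, suppose $\|v_\xi\|_{\HS}\leq K_{B'} e^{B'\jp{\xi}^{1/s}}$. Then for any $B>B'$,
\begin{equation*}
\sumxi e^{-B\jp{\xi}^{1/s}}\|v_\xi\|_{\HS} \leq K_{B'}\sumxi e^{-(B-B')\jp{\xi}^{1/s}},
\end{equation*}
so the Roumieu case is handled by taking $B'=B/2$ and the Beurling case by taking $B=2B'$.

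The only nontrivial step is the auxiliary estimate $\sumxi e^{-\epsilon\jp{\xi}^{1/s}}<\infty$ for every $\epsilon>0$, and this is not a real obstacle. I would prove it by inserting the factor $d_\xi^2\jp{\xi}^{-2k}$:
\begin{equation*}
\sumxi e^{-\epsilon\jp{\xi}^{1/s}} = \sumxi \bigl(d_\xi^2\jp{\xi}^{-2k}\bigr)\,\bigl(d_\xi^{-2}\jp{\xi}^{2k}e^{-\epsilon\jp{\xi}^{1/s}}\bigr),
\end{equation*}
choosing an integer $k>n/2$ so that Lemma \ref{L:series} ensures summability of the first factor, and using $d_\xi\geq 1$ together with \eqref{EQ:exp} to bound the second factor uniformly in $[\xi]$. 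This is the standard observation that (sub)exponential decay in $\jp{\xi}^{1/s}$ dominates any polynomial weight in $d_\xi$ and $\jp{\xi}$, already implicitly exploited in the estimates leading to \eqref{aux3}, and it completes the proof.
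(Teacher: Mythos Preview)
Your proof is correct and follows essentially the same approach as the paper's: both directions of the ``only if'' part (summability $\Rightarrow$ pointwise bound) are identical, and for the ``if'' part you route the pointwise bound back through the summability condition of Lemma~\ref{L:ser}, whereas the paper verifies the definition of the $\alpha$-dual directly---but the underlying estimate (using $\sumxi e^{-\epsilon\jp{\xi}^{1/s}}<\infty$) is the same in both arguments. One very minor quibble: the reference to \eqref{EQ:exp} for bounding $\jp{\xi}^{2k}e^{-\epsilon\jp{\xi}^{1/s}}$ is not quite literal, since \eqref{EQ:exp} concerns powers of $d_\xi$ rather than $\jp{\xi}$, but the required polynomial-versus-exponential bound is elementary and your parenthetical remark makes clear you are aware of this.
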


\begin{proof}
We prove the case (R) only since the proof of (B) is similar.
First we deal with ``If'' part. Let $v\in\Sigma$ be such that 
\eqref{EQ:ade} holds for every $B>0$. Let $\va\in\gamma_{s}(G)$. Then by
Theorem \ref{THM:Gevrey} there exist 
some constants $A>0$ and $C>0$ such that
$\|\widehat{\phi}(\xi)\|_{\HS}\leq C e^{-A\jp{\xi}^{1/s}}.$ 
Taking $B=A/2$ in 
\eqref{EQ:ade} we get that
\begin{eqnarray*}
\sumxi \sumij |(v_{\xi})_{ij}| |\widehat{\phi}(\xi)_{ij}| 
\leq
\sumxi \|v_{\xi}\|_{\HS} \|\widehat{\phi}(\xi)\|_{\HS}
\leq C K_B \sumxi e^{-\frac{A}{2}\jp{\xi}^{1/s}}<\infty,
\end{eqnarray*} 
so that $v \in \left[ {\gamma_s \left( G
\right)} \right]^\wedge$. \\
``Only if'' part. 
Let $v\in \br{\gamma_{s}(G)}^{\wedge}$ and let $B>0$.  
Then by Lemma \ref{L:ser} 
we have that 
$$
\sumxi e^{-B\jp{\xi}^{1/s}}||v_\xi||_{\HS}<\infty.$$ 
This implies that the exists a constant $K_{B}>0$ such that 
$e^{-B\jp{\xi}^{1/s}}||{v}(\xi)||_{\HS}\leq K_{B},$
yielding \eqref{EQ:ade}.
\end{proof}

We now want to show that the Gevrey spaces are perfect in the sense of K\"othe.
We define the $\alpha-$dual of $[\gamma_{s}(G)]^{\wedge}$ as 
$$
[\widehat{\gamma_{s}(G)}]^{\wedge}=
\left\{w=(w_{\xi})_{[\xi]\in\Gh}\in\Sigma:
\sumxi \sumij |(w_{\xi})_{ij}| |(v_\xi)_{ij}|<\infty 
\textrm{ for all } v\in[\gamma_{s}(G)]^{\wedge}\right\},
$$
and similarly for $[\gamma_{(s)}(G)]^{\wedge}$.
First, we prove the following lemma.

\begin{lemma}\label{L:perfect}
{\rm\bf (R)} We have $w \in \left[ {\widehat{\gamma_s \left( G
\right)}} \right]^\wedge$ if and only if there exists $B>0$ such that
\begin{equation}\label{EQ:pc}
\sumxi e^{B\jp{\xi}^{\frac{1} {s}}} \| {w_\xi}\|_{\HS}   < \infty.
\end{equation}
{\rm\bf (B)} We have $w \in \left[ {\widehat{\gamma_{(s)} \left( G
\right)}} \right]^\wedge$ if and only if for every $B>0$ the series
\eqref{EQ:pc} converges.
\end{lemma}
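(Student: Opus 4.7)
The argument splits into an easy "if" direction for both (R) and (B), an easy "only if" for (B), and a more delicate "only if" for (R); throughout I rely on the pointwise characterisation of $[\gamma_s(G)]^\wedge$ and $[\gamma_{(s)}(G)]^\wedge$ given by Theorem \ref{THM:aduals}.

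For the "if" parts, I combine the Cauchy--Schwarz bound $\sum_{i,j}|(w_\xi)_{ij}||(v_\xi)_{ij}|\leq \|w_\xi\|_\HS\|v_\xi\|_\HS$ with Theorem \ref{THM:aduals}. In (R), given $B>0$ with $\sum e^{B\jp{\xi}^{1/s}}\|w_\xi\|_\HS<\infty$, for any $v\in[\gamma_s(G)]^\wedge$ I apply Theorem \ref{THM:aduals} (R) at scale $B/2$ to get $\|v_\xi\|_\HS\leq K_{B/2}\,e^{(B/2)\jp{\xi}^{1/s}}$, so the pairing sum is dominated by $K_{B/2}\sum e^{(B/2)\jp{\xi}^{1/s}}\|w_\xi\|_\HS<\infty$. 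In (B) the quantifiers align the other way: for $v\in[\gamma_{(s)}(G)]^\wedge$, Theorem \ref{THM:aduals} (B) supplies one $B_0$ and $K_{B_0}$ with $\|v_\xi\|_\HS\leq K_{B_0}e^{B_0\jp{\xi}^{1/s}}$, and the hypothesis on $w$ is then invoked at that specific $B=B_0$.

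For the "only if" of (B), a direct construction suffices. Fixing any $B>0$, set
$$
(v_\xi)_{ij}:=e^{B\jp{\xi}^{1/s}}\,\overline{(w_\xi)_{ij}}/\|w_\xi\|_\HS
$$
when $w_\xi\neq 0$, and zero otherwise, so that $\|v_\xi\|_\HS=e^{B\jp{\xi}^{1/s}}$ on the nonzero blocks. Theorem \ref{THM:aduals} (B) puts $v\in[\gamma_{(s)}(G)]^\wedge$, and the hypothesis $w\in[\widehat{\gamma_{(s)}(G)}]^\wedge$ yields
$$
\sum_{[\xi]}\sum_{i,j}|(w_\xi)_{ij}||(v_\xi)_{ij}|=\sum_{[\xi]}e^{B\jp{\xi}^{1/s}}\|w_\xi\|_\HS<\infty;
$$
since $B>0$ was arbitrary, \eqref{EQ:pc} follows.

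The hard part is the "only if" of (R): a witness $v$ of single-scale exponential growth $e^{B\jp{\xi}^{1/s}}$ is \emph{not} admissible in $[\gamma_s(G)]^\wedge$, which by Theorem \ref{THM:aduals} (R) demands sub-exponential growth at every scale. I argue by contradiction: assume $\sum e^{B\jp{\xi}^{1/s}}\|w_\xi\|_\HS=\infty$ for every $B>0$, and inductively choose pairwise disjoint finite sets $F_n\subset\Gh$ with $\min_{[\xi]\in F_n}\jp{\xi}\to\infty$ and $\sum_{[\xi]\in F_n}e^{(1/n)\jp{\xi}^{1/s}}\|w_\xi\|_\HS\geq 1$, which is possible because deleting finitely many terms leaves each divergent series divergent. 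Setting $(v_\xi)_{ij}:=e^{(1/n)\jp{\xi}^{1/s}}\,\overline{(w_\xi)_{ij}}/\|w_\xi\|_\HS$ for $[\xi]\in F_n$ (with $w_\xi\neq 0$) and $v_\xi=0$ elsewhere yields $\|v_\xi\|_\HS=e^{(1/n)\jp{\xi}^{1/s}}$ on $F_n$. Since $1/n\to 0$ along blocks concentrating at high frequencies, for any prescribed $B'>0$ a split at an index $n_0$ with $1/n_0<B'$ (so that $\bigcup_{n<n_0}F_n$ is finite) shows $\|v_\xi\|_\HS\leq K_{B'}e^{B'\jp{\xi}^{1/s}}$ for all $[\xi]$; hence $v\in[\gamma_s(G)]^\wedge$ by Theorem \ref{THM:aduals} (R). But the pairing telescopes as $\sum_n\sum_{[\xi]\in F_n}e^{(1/n)\jp{\xi}^{1/s}}\|w_\xi\|_\HS\geq\sum_n 1=\infty$, contradicting $w\in[\widehat{\gamma_s(G)}]^\wedge$.
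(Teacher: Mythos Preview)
Your proof is correct. The ``if'' directions and the Beurling ``only if'' are essentially the paper's arguments (the paper uses a slightly different test sequence $(v_\xi)_{ij}=d_\xi e^{B\jp{\xi}^{1/s}}$ in (B), but the idea is the same).

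The genuine difference is in the Roumieu ``only if''. The paper does not argue directly: it identifies $[\gamma_s(G)]^\wedge$ with the intersection $\bigcap_{B>0}D_B$ of weighted $\ell^\infty$-type echelon spaces and then invokes K\"othe's general duality theory to conclude that its $\alpha$-dual is the union $\bigcup_{B>0}\widehat{D_B}$ of the corresponding $\ell^1$-type spaces. Your gliding-hump construction is more elementary and fully self-contained: by assembling $v$ block by block on disjoint finite sets $F_n$ with exponents $1/n\to 0$, you obtain a single $v$ lying in $[\gamma_s(G)]^\wedge$ (any fixed scale $B'$ eventually dominates $1/n$, and the finitely many earlier blocks are absorbed into the constant) whose pairing with $w$ diverges. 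The trade-off is that the paper's route situates the result within a general sequence-space framework, whereas yours avoids the external reference at the cost of an ad hoc construction. One wording quibble: the final sum does not ``telescope''; it simply decomposes over the disjoint blocks $F_n$.
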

\begin{proof}  
We first show the Beurling case as it is more straightforward. \\
\textbf{(B)} ``Only if'' part. We assume that $w \in \left[ {\widehat{\gamma_{(s)} \left( G
\right)}} \right]^\wedge$. Let $B>0$, and define
$(v_{\xi})_{ij}:=d_{\xi} e^{B\jp{\xi}^{\frac{1} {s}}}.$ 
Then $\|v_{\xi}\|_{\HS}=d_{\xi}^{2} e^{B\jp{\xi}^{\frac{1} {s}}}\leq Ce^{2B\jp{\xi}^{\frac{1} {s}}}$ by
\eqref{EQ:exp}, which implies $v\in{[\gamma_{(s)}(G)]^{\wedge}}$ by
Theorem \ref{THM:aduals}.
Consequently, using Lemma \ref{L:c}
we can estimate
\[
\sumxi  e^{B\jp{\xi}^{\frac{1}{s}}}  \|{w_\xi}\|_{\HS} \leq
\sumxi  d_{\xi} e^{B\jp{\xi}^{\frac{1}{s}}} \sumij  |(w_\xi)_{ij}| =
\sumxi \sumij  |(v_\xi)_{ij}| |(w_{\xi})_{ij}|  <
\infty,
\]
implying \eqref{EQ:pc}. \\

``If'' part. Here we are given $w\in\Sigma$ such
that for every $B>0$ the series  \eqref{EQ:pc} converges.
Let us take any $v\in {[\gamma_{(s)}(G)]^{\wedge}}$. 
By Theorem \ref{THM:aduals} there exist $B>0$ and $K>0$ such that
$\|v_{\xi}\|_{\HS}\leq  K e^{B\jp{\xi}^{\frac{1} {s}}}.$ 
Consequently, we can estimate
\[
\sumxi \sumij  |(v_\xi)_{ij}| |(w_{\xi})_{ij}|  \leq
\sumxi  \|v_\xi\|_{\HS} \|w_{\xi}\|_{\HS} \leq 
K \sumxi  e^{B\jp{\xi}^{\frac{1}{s}}}  \|{w_\xi}\|_{\HS}<
\infty
\]
by the assumption \eqref{EQ:pc}, which shows that  
$w \in \left[ {\widehat{\gamma_{(s)} \left( G \right)}} \right]^\wedge$.

\textbf{(R)}
For $B>0$ we consider the echelon space
$$
D_{B}=
\left\{v=(v_{\xi})\in\Sigma\;|\; \exists K>0\; : \quad
|(v_\xi)_{ij}|\leq K e^{B\jp{\xi}^{1/s}}
\textrm{ for all } 1\leq i,j\leq d_\xi
\right\}.
$$ 
By diagonal transform we have $D_{B}\cong l^{\infty}$,
and since $l^{\infty}$ is a perfect sequence space, we have
$\widehat{D_{B}}\cong l^{1}$, and it is given by
$$
\widehat{D_{B}}=\left\{w=(w_\xi)\in\Sigma: \sumxi\sumij 
e^{B\jp{\xi}^{\frac{1} {s}} } 
|({w_\xi})_{ij}|<\infty\right\}.
$$ 
By Theorem \ref{THM:aduals}
we know that
$\gamma_{s}(G)^{\wedge}=\bigcap_{B>0} D_{B}$, and hence
$\left[ {\widehat{\gamma_{s} \left( G \right)}} \right]^\wedge
=\bigcup_{B>0} \widehat{D_{B}}.$
This means that 
$w\in \left[ {\widehat{\gamma_{s} \left( G \right)}} \right]^\wedge$
if and only if
there exists $B>0$ such that we have
$\sumxi \sumij e^{2B\jp{\xi}^{\frac{1}{s}}}   |({w_\xi})_{ij}|<\infty.$
Consequently, by Lemma \ref{L:c} we get
$$
\sumxi  e^{B\jp{\xi}^{\frac{1}{s}}}  \|{w_\xi}\|_{\HS}\leq 
\sumxi  d_{\xi} e^{B\jp{\xi}^{\frac{1}{s}}}  \|{w_\xi}\|_{\ell^{1}(\C)}\leq
C\sumxi \sumij e^{2B\jp{\xi}^{\frac{1}{s}}}   |({w_\xi})_{ij}|<\infty,
$$
completing the proof of the ``only if'' part. Conversely, given
\eqref{EQ:pc} for some $2B>0$, we have
$$
\sumxi \sumij e^{B\jp{\xi}^{\frac{1}{s}}}   |({w_\xi})_{ij}|\leq
\sumxi  d_{\xi }e^{B\jp{\xi}^{\frac{1}{s}}}  \|{w_\xi}\|_{\HS}\leq
C\sumxi  e^{2B\jp{\xi}^{\frac{1}{s}}}  \|{w_\xi}\|_{\HS}<\infty,
$$
implying $w\in\left[ {\widehat{\gamma_{s} \left( G \right)}} \right]^\wedge$.
\end{proof}

Now we can show that the Gevrey spaces are perfect spaces (sometimes called K\"othe spaces).

\begin{thm} 
$\gamma_{s}(G)$ and $\gamma_{(s)}(G)$ are perfect spaces, that is, 
$\gamma_{s}(G)=[\widehat{\gamma_{s}(G)}]^{\wedge}$ and
$\gamma_{(s)}(G)=[\widehat{\gamma_{(s)}(G)}]^{\wedge}.$
\end{thm}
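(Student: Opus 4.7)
The plan is to prove that $\gamma_s(G)=[\widehat{\gamma_s(G)}]^\wedge$ (and analogously for the Beurling case) by comparing two characterisations already established: the Fourier-side description of the Gevrey spaces given by Theorem \ref{THM:Gevrey}, and the description of the second $\alpha$-dual given by Lemma \ref{L:perfect}. Both characterisations are phrased in terms of the Fourier coefficient sequence $(\widehat{\phi}(\xi))_{[\xi]\in\Gh}\in\Sigma$, so the theorem reduces to showing that the pointwise (sub-)exponential decay of $\|\widehat{\phi}(\xi)\|_{\HS}$ is equivalent to the exponentially weighted summability of $\|w_\xi\|_{\HS}$ over $\Gh$.

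First I would fix the Roumieu case. If $w=\widehat\phi$ with $\phi\in\gamma_s(G)$, Theorem \ref{THM:Gevrey} yields constants $B_0,K>0$ with $\|w_\xi\|_{\HS}\le K e^{-B_0\jp{\xi}^{1/s}}$; then, choosing $B=B_0/2$, I estimate
\begin{equation*}
\sumxi e^{B\jp{\xi}^{1/s}}\|w_\xi\|_{\HS}\le K\sumxi e^{-(B_0/2)\jp{\xi}^{1/s}},
\end{equation*}
and convergence of the latter series follows by dominating $e^{-(B_0/2)\jp{\xi}^{1/s}}\le C_N\jp{\xi}^{-2N}$ for $N>n/2$ (applied with the extra factor $d_\xi^2$) and invoking Lemma \ref{L:series}. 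Hence $w\in[\widehat{\gamma_s(G)}]^\wedge$ by Lemma \ref{L:perfect}. Conversely, if the series in \eqref{EQ:pc} converges for some $B>0$, then each term is bounded by a constant $K_B$, which gives the pointwise estimate $\|w_\xi\|_{\HS}\le K_B e^{-B\jp{\xi}^{1/s}}$ needed to place $\phi\in\gamma_s(G)$ via Theorem \ref{THM:Gevrey}.

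For the Beurling case I would run the same equivalence quantifier by quantifier. If $\phi\in\gamma_{(s)}(G)$, then for any $B>0$ I apply the Gevrey estimate at level $B'=2B$ (say), producing $K_{B'}$ with $\|w_\xi\|_{\HS}\le K_{B'}e^{-2B\jp{\xi}^{1/s}}$, and the same tail argument as above yields convergence of $\sumxi e^{B\jp{\xi}^{1/s}}\|w_\xi\|_{\HS}$; since $B$ was arbitrary, $w\in[\widehat{\gamma_{(s)}(G)}]^\wedge$ by Lemma \ref{L:perfect}. In the reverse direction, summability for every $B>0$ again gives boundedness of each term and hence the pointwise bound required by Theorem \ref{THM:Gevrey} (B) for every $B>0$.

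I do not expect a real obstacle: the substantive work has been done in Theorems \ref{THM:Gevrey} and \ref{THM:aduals} and in Lemma \ref{L:perfect}. The only point requiring care is the passage from pointwise exponential decay to the weighted $\ell^1$-type sum over the unitary dual, which must absorb both the slight loss in the exponent and the polynomial growth of $d_\xi$; this is handled cleanly by combining \eqref{EQ:dxi}, \eqref{EQ:exp} and Lemma \ref{L:series}, as used repeatedly in previous sections.
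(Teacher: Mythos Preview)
Your proposal is correct and follows essentially the same route as the paper: both arguments reduce the statement to matching the Fourier-side characterisation of the Gevrey space in Theorem~\ref{THM:Gevrey} with the series characterisation of the second $\alpha$-dual in Lemma~\ref{L:perfect}, the key step being that finiteness of $\sumxi e^{B\jp{\xi}^{1/s}}\|w_\xi\|_{\HS}$ forces the pointwise bound $\|w_\xi\|_{\HS}\le C e^{-B\jp{\xi}^{1/s}}$. The paper is marginally shorter because it invokes the general inclusion $X\subset[\widehat{X}]^{\wedge}$ for one direction rather than reproving it, but the substance is the same.
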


\begin{proof} 
We will show this for ${\gamma_{s}(G)}$ since the proof for
${\gamma_{(s)}(G)}$ is analogous.
From the definition of $[\widehat{\gamma_{s}(G)}]^{\wedge}$ we have 
$\gamma_{s}(G)\subseteq [\widehat{\gamma_{s}(G)}]^{\wedge}.$ 
We will prove the other direction, i.e.,
$[\widehat{\gamma_{s}(G)}]^ {\wedge}\subseteq \gamma_{s}(G)$. 
Let $w={(w_{\xi})_{[\xi]\in{\Gh}}}\in [\widehat{\gamma_{s}(G)}]^ {\wedge}$ and
define 
$$
\phi(x):=\sumxi d_{\xi}\Tr \left(w_{\xi}\xi(x)\right).
$$ 
The series makes sense due to Lemma \ref{L:perfect}, and we have
$\|\widehat{\phi}(\xi)\|_{\HS}=\|w_{\xi}\|_{\HS}$. 
Now since $w\in [\widehat{\gamma_{s}(G)}]^{\wedge}$ 
by Lemma \ref{L:perfect} there exists $B>0$ such that
$\sumxi e^{  B\left\langle \xi
\right\rangle^{\frac{1} {s}} } || {w_\xi}||_{\HS}  < \infty$, 
which implies that for some $C>0$ we have
\begin{eqnarray*}
e^{B\jp{\xi}^{1/s}}||w_{\xi}||_{\HS}< C
&\Rightarrow& 
||\widehat{\phi}(\xi)||_{\HS}\leq C e^{-B\jp{\xi}^{1/s}}.
\end{eqnarray*}  
By Theorem \ref{THM:Gevrey} this implies $\phi\in \gamma_{s}(G).$
Hence $\gamma_{s}(G)=[\widehat{\gamma_{s}(G)}]^{\wedge},$ 
i.e.  $\gamma_{s}(G)$ is a perfect space. 
\end{proof}

\section{Ultradistributions $\gamma_s'(G)$ and $\gamma_{(s)}'(G)$}
\label{SEC:ultra}

Here we investigate the Fourier coefficients criteria for spaces of ultradistributions.
The space $\gamma_{s}'(G)$ (resp. $\gamma_{(s)}'(G)$) of the 
ultradistributions of order $s$ is defined as the dual of 
$\gamma_{s}(G)$ (resp. $\gamma_{(s)}(G)$) 
endowed with the standard inductive limit
topology of $\gamma_{s}(G)$ (resp. the projective limit topology of $\gamma_{(s)}(G)$).

 \begin{defi} \label{DEF:dual}
 The space $\gamma_{s}'(G)\left(\textrm{resp. } \gamma_{(s)}'(G)\right)$ 
 is the set of the linear forms $u$ on $\gamma_{s}(G)\left(\textrm{resp. } \gamma_{(s)}(G)\right)$ 
 such that for every $\epsilon>0$ there exists $C_\epsilon$ 
 (resp. for some $\epsilon>0$ and $C>0$) such that
  $$
  |u(\phi)|\leq C_{\epsilon}\sup_{\alpha}\epsilon^{|\alpha|}(\alpha!)^{-s}
  \sup_{x\in G}|(-\mathcal{L}_{G})^{|\alpha|/2}\phi(x)|
  $$ 
 holds for all $\phi\in \gamma_{s}(G)$ (resp. $\phi\in\gamma_{(s)}(G)$). 
 \end{defi}
 We can take the Laplace-Beltrami operator in Definition \ref{DEF:dual}
 because of the equivalence of norms given by Proposition \ref{PROP:l}.
 
We recall that for any $v\in \gamma_{s}'(G)$, for $[\xi]\in\Gh$, we define the Fourier coefficients 
$\widehat{v}(\xi):=\jp{v,\xi^{\ast}}\equiv v(\xi^{*}).$

We have the following theorem showing that topological and $\alpha$-duals of Gevrey
spaces coincide.

\begin{thm} \label{THM:equiv}
Let $1\leq s<\infty.$ Then
$v\in \gamma_{s}'(G)\left(\textrm{resp. } \gamma_{(s)}'(G)\right)$ if and only if 
$v\in \gamma_{s}(G)^{\wedge}\left(\textrm{resp. } \gamma_{(s)}(G)^{\wedge}\right).$ 
\end{thm}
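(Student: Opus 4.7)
The plan is to identify the sequence $v_\xi$ of the $\alpha$-dual with the Fourier coefficient $\widehat{v}(\xi)=v(\xi^{*})$ of the topological dual, and to show that the growth bound \eqref{EQ:ade} characterising Theorem \ref{THM:aduals} is exactly equivalent to the continuity condition of Definition \ref{DEF:dual}. I will carry out both implications by exploiting that matrix coefficients $\xi^{*}_{ij}$ are $L^{\infty}$-normalised eigenfunctions of $-\L_{G}$, together with an optimisation argument based on the identity \eqref{EQ:x0}.

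For the direction $v\in\gamma_{s}'(G)\Longrightarrow v\in\gamma_{s}(G)^{\wedge}$, I test the continuity inequality in Definition \ref{DEF:dual} against $\phi=\xi^{*}_{ij}$, using $\|\xi^{*}_{ij}\|_{L^{\infty}}\leq 1$ and the fact that $(-\L_{G})^{|\alpha|/2}\xi^{*}_{ij}=\lambda_{[\xi]}^{|\alpha|}\xi^{*}_{ij}$ (since matrix coefficients and their conjugates are eigenfunctions of the real operator $-\L_{G}$). This yields
$$|\widehat{v}(\xi)_{ij}|=|v(\xi^{*}_{ij})|\leq C_{\epsilon}\sup_{\alpha}\epsilon^{|\alpha|}(\alpha!)^{-s}\lambda_{[\xi]}^{|\alpha|}.$$
Using $(\alpha!)^{-s}\leq n^{s|\alpha|}(|\alpha|!)^{-s}$ and the standard optimisation $\sup_{m}a^{m}/(m!)^{s}\leq Ce^{sa^{1/s}}$ that underlies \eqref{EQ:x0}, this is bounded by $C_{\epsilon}C e^{c\epsilon^{1/s}\lambda_{[\xi]}^{1/s}}$ for some absolute $c$. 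In the Roumieu case, for any prescribed $B$ I choose $\epsilon$ so small that $c\epsilon^{1/s}<B/2$; the hypothesis supplies $C_{\epsilon}$, and absorbing $d_{\xi}$ via the polynomial-to-exponential estimate \eqref{EQ:exp} together with \eqref{EQ:ll} then gives $\|\widehat{v}(\xi)\|_{\HS}\leq K_{B}e^{B\jp{\xi}^{1/s}}$. In the Beurling case the specific $(\epsilon,C)$ of Definition \ref{DEF:dual} directly produces the $\alpha$-dual bound for one fixed $B$.

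For the converse $v\in\gamma_{s}(G)^{\wedge}\Longrightarrow v\in\gamma_{s}'(G)$, given the sequence $(v_{\xi})$ I define the linear form
$$\tilde{v}(\phi):=\sumxi d_{\xi}\Tr\!\p{v_{\xi}\widehat{\phi}(\xi)},\quad \phi\in\gamma_{s}(G),$$
and verify, via Peter--Weyl orthogonality applied to $\phi=\xi^{*}_{ij}$, that the Fourier coefficients of $\tilde{v}$ reproduce $v_{\xi}$, so that $\widehat{\tilde{v}}=v$. To prove the convergence of the series and the continuity in the sense of Definition \ref{DEF:dual}, I first apply the Hausdorff--Young inequality \eqref{EQ:HY} to $(-\L_{G})^{m}\phi$, obtaining $d_{\xi}^{-1/2}|\xi|^{2m}\|\widehat{\phi}(\xi)\|_{\HS}\leq\|(-\L_{G})^{m}\phi\|_{L^{\infty}}$. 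Writing $p_{\epsilon}(\phi)$ for the seminorm in Definition \ref{DEF:dual} (valid with $\L_{G}$ by Proposition \ref{PROP:l}) and taking $\alpha=(2m,0,\ldots,0)$ so that $\alpha!=(2m)!$, I get $\|(-\L_{G})^{m}\phi\|_{L^{\infty}}\leq p_{\epsilon}(\phi)((2m)!)^{s}\epsilon^{-2m}$. Optimising over $m$ via \eqref{EQ:x0} yields
$$\|\widehat{\phi}(\xi)\|_{\HS}\leq d_{\xi}^{1/2}\,p_{\epsilon}(\phi)\,e^{-c\epsilon^{1/s}|\xi|^{1/s}}.$$
Combining this with \eqref{EQ:ade} and \eqref{EQ:ll}, Cauchy--Schwarz gives
$$|\tilde{v}(\phi)|\leq p_{\epsilon}(\phi)\sumxi d_{\xi}^{3/2}K_{B}\,e^{(B-c'\epsilon^{1/s})\jp{\xi}^{1/s}},$$
which converges (using also Lemma \ref{L:series} to handle $d_{\xi}^{3/2}$) whenever $c'\epsilon^{1/s}$ exceeds $B$ by a fixed margin. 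In the Roumieu case, for the given $\epsilon$ I pick $B$ small enough (available by hypothesis for every $B$); in the Beurling case, the $B$ is fixed, and I pick $\epsilon$ large enough, producing one specific admissible seminorm.

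The main obstacle is the reciprocal role of the two quantifiers: the seminorm parameter $\epsilon$ in Definition \ref{DEF:dual} and the decay parameter $B$ in Theorem \ref{THM:aduals} appear on opposite sides of the estimates, so one must carefully verify that ``for every $\epsilon$'' matches ``for every $B$'' (Roumieu) and ``for some $\epsilon$'' matches ``for some $B$'' (Beurling) in both implications, and that the polynomial factors $d_{\xi}$, $d_{\xi}^{3/2}$ are always absorbed into a slightly weaker exponential using \eqref{EQ:exp} and \eqref{EQ:dxi}.
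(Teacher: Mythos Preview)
Your proposal is correct and follows essentially the same strategy as the paper. The ``only if'' direction is virtually identical: both you and the paper test the continuity inequality of Definition~\ref{DEF:dual} on the matrix coefficients $\xi^{*}_{ij}$, use that these are eigenfunctions of $-\L_{G}$ with $L^{\infty}$-norm at most $1$ (the paper uses the cruder bound $d_{\xi}^{1/2}$, which is harmless), and then optimise $\sup_{m}(\epsilon\jp{\xi})^{m}(m!)^{-s}$ to reach the exponential growth bound of Theorem~\ref{THM:aduals}.

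The only noteworthy difference is in the ``if'' direction. The paper defines $v(\phi)=\sumxi d_{\xi}\Tr(\widehat{\phi}(\xi)v_{\xi})$ and verifies continuity \emph{sequentially}: it takes $\phi_{j}\to\phi$ in $\gamma_{s}(G)$, invokes the proof of Theorem~\ref{THM:Gevrey} to convert this into $\|\widehat{\phi_{j}}(\xi)-\widehat{\phi}(\xi)\|_{\HS}\leq K_{j}e^{-B\jp{\xi}^{1/s}}$ with $K_{j}\to 0$, and then sums against $\|v_{\xi}\|_{\HS}$ using Lemma~\ref{L:ser}. You instead establish the seminorm bound $|v(\phi)|\leq C_{\epsilon}\,p_{\epsilon}(\phi)$ of Definition~\ref{DEF:dual} directly, by first deriving $\|\widehat{\phi}(\xi)\|_{\HS}\leq C d_{\xi}^{1/2}p_{\epsilon}(\phi)e^{-c\epsilon^{1/s}|\xi|^{1/s}}$ from Hausdorff--Young and the infimum \eqref{EQ:x0}, and then pairing with the $\alpha$-dual bound. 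Your route is slightly more explicit and avoids appealing to the internals of the proof of Theorem~\ref{THM:Gevrey}; the paper's route is marginally shorter because that work has already been done. Both handle the quantifier matching (every $\epsilon$ $\leftrightarrow$ every $B$ for Roumieu, some $\epsilon$ $\leftrightarrow$ some $B$ for Beurling) in the same way.
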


\begin{proof} 
\textbf{(R)} ``If'' part. 
Let  $v\in \gamma_{s}(G)^{\wedge}.$ 
For any $\phi\in \gamma_{s}(G)$ define 
\begin{equation}\label{EQ:defv}
v(\phi):=\sumxi d_{\xi}\Tr\left(\widehat{\phi}(\xi)v_{\xi}\right).
\end{equation}
Since by Theorem \ref{THM:Gevrey} there exist some $B>0$ such that
$||\widehat{\phi}(\xi)||_{\HS}\leq C e^{-B\jp{\xi}^{1/s}}$,
we can estimate
$$
\sumxi d_{\xi}\Tr\left(\widehat{\phi}(\xi)v_{\xi}\right) \leq
\sumxi d_{\xi} \|\widehat{\phi}(\xi)\|_{\HS} \|v_{\xi}\|_{\HS} \leq
C\sumxi d_{\xi}  e^{-B\jp{\xi}^{1/s}} \|v_{\xi}\|_{\HS}<\infty
$$
by Lemma \ref{L:ser} and \eqref{EQ:exp}.
Therefore, $v(\phi)$ in \eqref{EQ:defv} is a well-defined linear functional
on $ \gamma_{s}(G)$.
It remains to check that $v$ is continuous.    
Suppose $\phi_j\rightarrow \phi$ in $\gamma_{s}(G)$ as $j\to\infty$, that is, 
in view of Proposition \ref{PROP:l},
there is a constant $A>0$ such that
$$
\sup_{\alpha}A^{-|\alpha|}(\alpha!)^{-s}\sup_{x\in G}
|(-\mathcal{L}_{G})^{|\alpha|/2}(\phi_j(x)-\phi(x))|\rightarrow 0
$$ 
as $j\rightarrow \infty.$ 
It follows that 
$$
\|\left(-{\mathcal{L}}_{G}\right)^{|\alpha|/2}(\phi_j-\phi)\|_{\infty}\leq 
C_j A^{|\alpha|}\left((|\alpha|)!\right)^s,
$$ 
for a sequence $C_j\rightarrow 0$ as $j\rightarrow \infty.$   
From the proof of Theorem \ref{THM:Gevrey} it follows that
we then have 
$$
\|\widehat{\phi_j}(\xi)-\widehat \phi(\xi)\|_{\HS}\leq 
K_{j} e^{-B\jp{\xi}^{1/s}},
$$ 
where $B>0$ and $K_j\rightarrow 0$ as $j\rightarrow \infty.$  
Hence we can estimate 
\begin{eqnarray*}
|v(\phi_j-\phi)|&\leq&
\sumxi d_{\xi} \|\widehat{\phi_j}(\xi)-\widehat\phi(\xi)\|_{\HS}
\|{v}_(\xi)\|_{\HS}\nonumber\\
&\leq& K_{j} \sumxi d_{\xi} e^{-B\jp{\xi}^{1/s}} \|v_{\xi}\|_{\HS}
\rightarrow 0\nonumber 
\end{eqnarray*}  
as $j\rightarrow \infty$ since $K_j\rightarrow 0$ as $j\rightarrow \infty$ and 
$\sumxi d_{\xi} e^{-B\jp{\xi}^{1/s}} \|v_{\xi}\|_{\HS}<\infty$ by
Lemma \ref{L:ser} and \eqref{EQ:exp}.  
Therefore, we have $v\in \gamma_{s}'(G).$ \\

``Only if'' part. Let us now take $v\in \gamma_{s}'(G).$ 
This means that for every $\epsilon>0$ there exists $C_\epsilon$ such that
 $$
 |v(\phi)|\leq C_{\epsilon}\sup_{\alpha}\epsilon^{|\alpha|}(\alpha!)^{-s}\sup_{x\in G}
 |(-\mathcal{L}_{G})^{|\alpha|/2}\phi(x)|
 $$ 
 holds for all $\phi\in \gamma_{s}(G).$ 
 So then, in particular, we have
\begin{eqnarray*} 
|v(\xi^{\ast}_{ij})| &\leq& C_{\epsilon}\sup_{\alpha}\epsilon^{|\alpha|}(\alpha!)^{-s}
\sup_{x\in G}|(-\L_{G})^{|\alpha|/2}\xi^{\ast}_{ij}(x)|\nonumber\\
  &=& C_\epsilon \sup_{\alpha}\epsilon^{|\alpha|}(\alpha!)^{-s}|\xi|^{|\alpha|}\sup_{x\in G}|\xi^{\ast}_{ij}(x)| \\
  & \leq & C_\epsilon \sup_{\alpha}\epsilon^{|\alpha|}(\alpha!)^{-s}\langle\xi\rangle^{|\alpha|}\sup_{x\in G}
  \|\xi^{\ast}(x)\|_{\HS} \\
  & = & C_\epsilon \sup_{\alpha}\epsilon^{|\alpha|}(\alpha!)^{-s}\langle\xi\rangle^{|\alpha|} 
  d_{\xi}^{1/2}.
\end{eqnarray*}
This implies
\begin{eqnarray*}
||v({\xi^{\ast}})||_{\HS} =
\sqrt{{\sum_{i,j=1}^{d_{\xi}}|v(\xi^{\ast}_{ij})|^2}}
\leq  
C_\epsilon d_{\xi}^{3/2} \sup_{\alpha}\epsilon^{|\alpha|}(\alpha!)^{-s}\langle\xi\rangle^{|\alpha|}.
\end{eqnarray*}
Setting $r=\epsilon\langle\xi\rangle$ and using inequalities
$$
\alpha!\geq|\alpha|!n^{-|\alpha|}
\textrm{ and }
\p{\frac{(r^{1/s}n)^{|\alpha|}}{|\alpha|!}}^{s}\leq
\p{e^{r^{1/s}n}}^{s}=e^{n s r^{1/s}},
$$
we obtain 
\begin{eqnarray}
\|v(\xi^{\ast})\|_{\HS} &\leq&
C_{\epsilon} d_{\xi}^{3/2} \sup_{\alpha}\left(rn^{s}\right)^{|\alpha|}\left(|\alpha|!\right)^{-s}\nonumber\\
&\leq & C_{\epsilon}d_{\xi}^{3/2} \sup_{\alpha}e^{ns r^{1/s}}\nonumber\\
&=&C_\epsilon d_{\xi}^{3/2} e^{ns \epsilon^{1/s}\jp{\xi}^{1/s}} 
\end{eqnarray} 
for all $\epsilon>0.$
We now recall that $v(\xi^{*})=\widehat{v}(\xi)$ and, therefore,
with $v_{\xi}:=\widehat{v}(\xi)$, we get
$v\in \gamma_s(G)^{\wedge}$ by Theorem \ref{THM:aduals}
and \eqref{EQ:exp}. \\

\textbf{(B)} This case is similar but we give the proof for completeness. \\
 ``If'' part. 
Let  $v\in \gamma_{(s)}(G)^{\wedge}$ and  
for any $\phi\in \gamma_{(s)}(G)$ define 
$v(\phi)$ by \eqref{EQ:defv}. By a similar argument to the case (R),
it is a well-defined linear functional on $\gamma_{(s)}(G)$.
To check the continuity,
suppose $\phi_j\rightarrow \phi$ in $\gamma_{(s)}(G)$, that is, 
for every $A>0$ we have
$$
\sup_{\alpha}A^{-|\alpha|}(\alpha!)^{-s}\sup_{x\in G}
|(-\mathcal{L}_{G})^{|\alpha|/2}(\phi_j(x)-\phi(x))|\rightarrow 0
$$ 
as $j\rightarrow \infty.$ 
It follows that 
$$
\|\left(-{\mathcal{L}}_{G}\right)^{|\alpha|/2}(\phi_j-\phi)\|_{\infty}\leq 
C_j A^{|\alpha|}\left((|\alpha|)!\right)^s,
$$ 
for a sequence $C_j\rightarrow 0$ as $j\rightarrow \infty,$ for every $A>0.$   
From the proof of Theorem \ref{THM:Gevrey} it follows that
for every $B>0$ we have 
$$
\|\widehat{\phi_j}(\xi)-\widehat \phi(\xi)\|_{\HS}\leq 
K_{j} e^{-B\jp{\xi}^{1/s}},
$$ 
where $K_j\rightarrow 0$ as $j\rightarrow \infty.$  
Hence we can estimate 
\begin{eqnarray*}
|v(\phi_j-\phi)|&\leq&
\sumxi d_{\xi} \|\widehat{\phi_j}(\xi)-\widehat\phi(\xi)\|_{\HS}
\|{v}_{\xi}\|_{\HS}\nonumber\\
&\leq& K_{j} \sumxi d_{\xi} e^{-B\jp{\xi}^{1/s}} \|v_{\xi}\|_{\HS}
\rightarrow 0\nonumber 
\end{eqnarray*}  
as $j\rightarrow \infty$ since $K_j\rightarrow 0$ as $j\rightarrow \infty$, and 
where we now take $B>0$ to be such that
$\sumxi d_{\xi} e^{-B\jp{\xi}^{1/s}} \|v_{\xi}\|_{\HS}<\infty$ by
Lemma \ref{L:ser} and \eqref{EQ:exp}.  
Therefore, we have $v\in \gamma_{(s)}'(G).$ \\

``Only if'' part. Let $v\in \gamma_{(s)}'(G).$ This means that 
there exists $\epsilon>0$ and $C>0$ such that
$$ 
|v(\phi)| \leq C\sup_{\alpha}\epsilon^{|\alpha|}(\alpha!)^{-s}
\sup_{x\in G}|(-\mathcal{L}_{G})^{|\alpha|/2}\phi(x)|
$$ 
holds for all $\phi\in \gamma_{(s)}(G).$ 
Then, proceeding as in the case (R), we obtain
\begin{equation}
\|v(\xi^{\ast})\|_{\HS}\leq Cd_{\xi}^{3/2} e^{ns\epsilon^{1/s}\jp{\xi}^{1/s}},
\end{equation}
i.e. 
$\|\widehat{v}(\xi)\|_{\HS}\leq C e^{\delta\jp{\xi}^{1/s}},$ for some
$\delta>0$.
Hence $v\in \gamma_{(s)}(G)^{\wedge}$ by  by Theorem \ref{THM:aduals}.
\end{proof}

\end{document}